\newtheorem{thm}{Theorem}
\newtheorem{cor}[thm]{Corollary}
\newtheorem{conj}{Conjecture}
\newtheorem{lemma}[thm]{Lemma}
\newtheorem*{theorem*}{Theorem}
\DeclareMathOperator{\F}{\mathbb{F}}
\begin{document}
\baselineskip=16.3pt
\parskip=14pt

\begin{center}
\section*{Linearized Polynomials, Galois Groups and Symmetric Power Modules}

{\large 
Rod Gow and Gary McGuire
 \\ { \ }\\
UCD School of Mathematics and Statistics\\
University College Dublin\\
Ireland}
\end{center}

 \subsection*{Abstract}
 
 We investigate some Galois groups of linearized polynomials  
  over fields such as $\F_q(t)$. The space of roots of such a polynomial is a module for its Galois group.
  We present a realization of the symmetric powers of this module, 
  as a subspace of  the splitting field
  of another linearized polynomial.

\section{Introduction}

Let $p$ be a prime number, and let $q=p^a$ where $a$ is a positive integer.
Let $\F_q$ denote the finite field with $q$ elements.
Let $F$ be a field of characteristic $p$,
and assume that $F$ contains $\F_q$.
A $q$-linearized polynomial over $F$  is a polynomial of the form 
\begin{equation}\label{lin1}
L=a_0x+a_1x^q+a_2x^{q^2}+\cdots+a_n x^{q^n} \in F[x].
\end{equation}
If $a_n\not=0$ we say that $n$ is the $q$-degree of $f$.
We will usually say $q$-polynomial instead of $q$-linearized polynomial.

The set of roots of a $q$-polynomial $L$ forms an $\F_q$-vector space,
which is contained in a splitting field of $L$. We make this statement more precise in the following 
simple lemma, which also serves as an introduction to the topics of this paper.

\begin{lemma} \label{vector_space}
Let $F$ be a field of prime characteristic $p$ that contains $\F_q$. Let $L$ be a $q$-polynomial of $q$-degree $n$ in $F[x]$, with
\[
L=a_0x+a_1x^q+a_2x^{q^2}+\cdots+a_n x^{q^n}.
\]
Let $E$ be a splitting field for $L$ over $F$ and let $V$ be the set of roots of $L$ in $E$. Let $G$ be the Galois group of $E$
over $F$. Suppose that $a_0\neq 0$. Then $V$ is an $\F_q$-vector space of dimension $n$ and $G$ is naturally a subgroup
of $GL(n,q)$.
\end{lemma}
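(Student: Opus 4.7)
The plan is to establish the three claims in order: that $V$ is an $\F_q$-vector space, that its dimension is exactly $n$, and then that $G$ embeds faithfully into $GL(V)$.

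First I would verify that $L$ is an additive, $\F_q$-linear function on $E$. Because $F$ has characteristic $p$ and $q$ is a power of $p$, the Frobenius identity $(x+y)^{q^i} = x^{q^i} + y^{q^i}$ holds for all $i$, and $(cx)^{q^i} = c^{q^i} x^{q^i} = c x^{q^i}$ whenever $c \in \F_q$ (since $c^q = c$). Summing termwise, $L(x+y) = L(x)+L(y)$ and $L(cx) = cL(x)$. Hence $V = \ker L$ is an $\F_q$-subspace of $E$.

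Next I would compute the dimension by showing $L$ is separable. The formal derivative is $L' = a_0 + \sum_{i\ge 1} q^i a_i x^{q^i-1}$, and since $q^i \equiv 0 \pmod p$ for $i \ge 1$, all terms but the first vanish, leaving $L' = a_0$. The hypothesis $a_0 \ne 0$ forces $\gcd(L,L') = 1$, so $L$ has $\deg L = q^n$ distinct roots in $E$. Therefore $|V| = q^n$ and $\dim_{\F_q} V = n$.

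Finally, each $\sigma \in G$ is an $F$-automorphism of $E$, hence fixes $\F_q \subset F$ elementwise and commutes with the field operations used to define the $\F_q$-module structure on $V$. Since $\sigma$ permutes the roots of $L$ (as $L \in F[x]$), it restricts to an $\F_q$-linear automorphism of $V$, giving a homomorphism $G \to GL(V) \cong GL(n,q)$. The main point to check is that this restriction map is injective; I expect this to be the chief obstacle, but it follows because $E$ is the splitting field of $L$ over $F$, so $E = F(V)$, and an $F$-automorphism that fixes every element of $V$ fixes all of $E$ and is thus trivial. Choosing an $\F_q$-basis of $V$ then identifies this embedding with $G \hookrightarrow GL(n,q)$.
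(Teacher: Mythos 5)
Your proof is correct and follows essentially the same route as the paper: Frobenius additivity and $\lambda^q=\lambda$ give the $\F_q$-space structure, $L'=a_0\neq 0$ gives separability and hence $\dim V=n$, and the Galois action is checked to be $\F_q$-linear. You additionally verify that the map $G\to GL(V)$ is injective (via $E=F(V)$), a point the paper's proof leaves implicit; that is a welcome extra detail rather than a divergence.
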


\begin{proof}
The derivative $L'(x)=a_0$ and is thus a nonzero constant under the hypothesis above. It follows that $L$ has no repeated
roots and thus $|V|=q^n$. Let $\alpha$ and $\beta$ be elements of $V$. Then $L(\alpha)=L(\beta)=0$. Since
\[
(\alpha+\beta)^{q^i}=\alpha^{q^i}+\beta^{q^i}
\]
for all $i\geq 0$, it is clear that $L(\alpha+\beta)=0$, and hence $\alpha+\beta\in V$.

Let $\lambda$ be an element of $\F_q$. Since $\lambda^q=\lambda$, we have $(\lambda\alpha)^{q^i}=\lambda \alpha^{q^i}$. It
follows that $L(\lambda\alpha)=\lambda L(\alpha)=0$ and hence $\lambda\alpha\in V$. These arguments show that $V$ is a vector space
over $\F_q$. Furthermore, since $|V|=q^n$, $V$ has dimension $n$ over $\F_q$.

Let $\sigma$ be an element of $G$. By definition of Galois group action, $G$ permutes the roots of $L$ and hence maps $V$ into itself.
In addition, since $G$ is a group of field automorphisms of $E$, we have
\[
\sigma(\alpha+\beta)=\sigma(\alpha)+\sigma(\beta)
\]
for all $\alpha$ and $\beta$ in $V$. 

Now let $\lambda$ be an element of $\F_q$. Since $\F_q$ is assumed to be a subfield of $F$ and $G$ fixes $F$ elementwise, we have
$\sigma(\lambda)=\lambda$. Then, again by definition of Galois group action,
\[
\sigma(\lambda\alpha)=\sigma(\lambda)\sigma(\alpha)=\lambda\sigma(\alpha).
\]
This shows that the action of $G$ on $V$ is $\F_q$-linear and thus $G$ may be considered to be a subgroup of $GL(n,q)$.
\end{proof}

We call $V$ the ($\F_q$-vector) space of roots of $L$.
The proof shows that $V$ is an $FG$-module.

We remark that the Galois group of a generic $q$-polynomial is $GL(n,q)$,
as shown for example in \cite{W} when proving a theorem of Dickson.

We recall the definition of the symmetric powers.
Let $V$ be any vector space over a field $F$.
The $r$-th symmetric power of $V$, $Sym^r(V)$,  is another vector space constructed 
as the quotient space of $V^{\otimes r}$ by the subspace generated by all
$v_1\otimes \cdots \otimes v_r - v_{\sigma(1)}\otimes \cdots \otimes v_{\sigma(r)}$
where $\sigma \in S_r$.
If $e_1, e_2, \ldots ,e_r$ is a basis for $V$ then
$e_1^{k_1} e_2^{k_2} \cdots e_r^{k_n}$
where $k_1+\cdots +k_n =r$ is a basis for  $Sym^r(V)$,
where we denote the operation as ordinary multiplication.
The $r$-th symmetric power can thus be identified with the space of all 
homogeneous polynomials of degree $r$.
The symmetric powers play an important role in  representation theory.

For the remainder of this section we shall outline the structure of this paper.
 First,  Section \ref{LPB} 
 presents a background discussion of $q$-polynomials,
with a focus on the relevant issues for this article.

%The motivation for this paper arose as follows.
Let $L(x)$ be a monic $q$-linearized polynomial with the same hypotheses 
as Lemma  \ref{vector_space}.
Let $M(x)=L(x)/x$.
Then it is easy to see that $M(x)$ is a monic polynomial in $x^{q-1}$,
say $M(x)=P(x^{q-1})$.
The polynomial $P(x)$ is monic of degree $(q^n-1)/(q-1)$ and is called the
projective polynomial associated to $L$.
If $M$ is irreducible over $F$ then so is $P$.

We will show in Section \ref{SLT}
that any polynomial divides a linearized polynomial.
Applying this lemma to $P(x)$, let $L_P(x)$ be the linearized polynomial of minimal degree
that is divisible by $P(x)$. 
We will show in Section \ref{SLT}
that $L_P$ has degree $q^d$ where $d$ is the dimension of the $\F_q$ span of the roots of $P$.
So we have a construction that starts with $L$, 
then constructs $P$, and then another linearized polynomial $L_P$.

We wish to compare the Galois groups of $L$ and $P$,
which we denote by $G_L$ and $G_P$ respectively.
Let $K_L$ be a splitting field for $L$.
Let $K_P$ be a splitting field for $P$, which is the same as a splitting field for $L_P$
(so $P$ and $L_P$ have the same Galois group).
If $\alpha\in K_L$ is a nonzero root of $L$, then 
$P(\alpha^{q-1})=0$ and so $L_P(\alpha^{q-1})=0$.
Each $\alpha$ and all its $\F_q^*$-multiples give rise to the same root of $P$.
Conversely, if $\beta$ is a root of $P$ then the roots of
$x^{q-1}-\beta$ are roots of $L$.
This implies that $K_P \subseteq K_L$ and that $K_P$ 
contains all the $(q-1)$-th powers of the roots of $L$.
This also implies that $G_L=Gal(L)$ has a normal subgroup $N=Gal(K_L:K_P)$ 
such that $G_L/N \cong G_P$, and that $N$ will be a subgroup of a cyclic
group of order $q-1$.

The result we wish to present is that the
symmetric powers are easily visible in this setting. 
To summarize our results, let $V_L$ denote 
the space of roots of $L$, a $G_L$-module,
and let $V_P$ denote the space of roots of $L_P$, a $G_P$-module.
By the construction, since the $(q-1)$-th powers
of the roots of $L$ are roots of $L_P$,
one might intuitively expect the $(q-1)$-th symmetric power of $V_L$ to 
be related to $V_P$.
We will see that in order to make this precise, the following idea is important.

Let $\alpha_1, \ldots , \alpha_n$ be a basis for $V_L$. 
The linear independence of the elements
$\alpha_1^{k_1}\alpha_2^{k_2}\cdots \alpha_n^{k_n}$ 
%where $k_1+\cdots +k_n =p-1$ (and the $k_j$ are nonnegative integers) 
will be of crucial importance when trying to find symmetric powers.
Thus, we will study the evaluation mapping
$x_1^{k_1} x_2^{k_2} \cdots x_n^{k_n}\mapsto
\alpha_1^{k_1}\alpha_2^{k_2}\cdots \alpha_n^{k_n}$
from the space  of all homogeneous polynomials in $n$ variables,
having degree $r$ and coefficients in $F$, to $E$.
In order to realize a copy of  the $r$-th symmetric power in $E$, 
it is necessary and sufficient that 
this evaluation map be injective.

Sections \ref{CSR}-\ref{inj_sect} of the paper present the details of the above summary.
Section  \ref{CSR} begins the discussion of the injectivity of evaluation maps.
Next in Section \ref{proj_symm} we present the homomorphism from some
symmetric powers to $E$, and explain the importance of injectivity.
Section \ref{inj_sect} has a detailed discussion of the injectivity of the evaluation map,
 and proves the injectivity of the evaluation map in some cases.
 Section \ref{qdeg2} discusses the particular case of $q$-degree 2.
In Section \ref{RTM}  we talk about $V_L$ as a module for the Galois group,
and discuss the irreducibility of the symmetric power modules.

\section{Linearized Polynomials Background}\label{LPB}

We present two simple results which we will need later.
The first is a converse to Lemma \ref{vector_space}.
The proof is essentially the same as Theorem 3.52  in \cite{LN},
which goes back to Dickson.

\begin{lemma}\label{lin}
Let $V$ be a finite dimensional vector space over $\F_q$, which is contained in a field extension 
$E$ of $\F_q$.
Then the polynomial $\prod_{v\in V} (x-v)$ is a $q$-linearized polynomial. 
\end{lemma}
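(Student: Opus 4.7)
The plan is to induct on $n = \dim_{\F_q} V$, exploiting the recursive structure of $V$ as a union of cosets over a hyperplane.

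For the base case $n=0$ (so $V = \{0\}$), the product is just $x$, which is trivially $q$-linearized. For the inductive step, suppose the statement holds for all $\F_q$-subspaces of dimension less than $n$, and let $V$ have dimension $n$ with basis $v_1, \ldots, v_n$. Set $W = \mathrm{span}(v_1, \ldots, v_{n-1})$ and put $g(x) = \prod_{w \in W}(x-w)$, which is $q$-linearized by the inductive hypothesis. The key observation is that $V$ decomposes as the disjoint union of cosets $V = \bigsqcup_{\lambda \in \F_q}(W + \lambda v_n)$, whence
\[
f(x) := \prod_{v\in V}(x-v) = \prod_{\lambda\in\F_q} g(x - \lambda v_n).
\]

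Now I would use that $g$, being $q$-linearized, defines an $\F_q$-linear map on $E$, so $g(x - \lambda v_n) = g(x) - \lambda g(v_n)$ for $\lambda \in \F_q$. Writing $\beta = g(v_n)$, note $\beta \neq 0$ because $v_n \notin W$ implies every factor $v_n - w$ is nonzero. Therefore
\[
f(x) = \prod_{\lambda\in\F_q}\bigl(g(x) - \lambda\beta\bigr).
\]
Applying the standard identity $\prod_{\lambda\in\F_q}(y - \lambda\beta) = y^q - \beta^{q-1} y$ (which follows from $\prod_{\lambda\in\F_q}(z-\lambda) = z^q - z$ via the substitution $z = y/\beta$, valid since $\beta\neq 0$), we obtain
\[
f(x) = g(x)^q - \beta^{q-1} g(x).
\]

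Since $g(x) = \sum_i b_i x^{q^i}$ is $q$-linearized, so is $g(x)^q = \sum_i b_i^q x^{q^{i+1}}$ (the $q$-th power commutes with the Frobenius-indexed sum in characteristic $p$), and $\beta^{q-1} g(x)$ is a scalar multiple of a $q$-linearized polynomial. Their difference $f(x)$ is therefore $q$-linearized, completing the induction. The main step that warrants care is the derivation of the identity $\prod_{\lambda}(y-\lambda\beta) = y^q - \beta^{q-1} y$, but this is elementary; the rest is essentially bookkeeping around the coset decomposition.
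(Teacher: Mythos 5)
Your proof is correct, but it follows a genuinely different route from the paper's. The paper argues non-inductively via the Moore determinant: it forms the $(n+1)\times(n+1)$ determinant $D(x)$ whose first $n$ rows are $(\alpha_i,\alpha_i^q,\ldots,\alpha_i^{q^n})$ for a basis $\alpha_1,\ldots,\alpha_n$ of $V$ and whose last row is $(x,x^q,\ldots,x^{q^n})$; expanding along the last row shows $D$ is visibly a $q$-polynomial of degree at most $q^n$, and since every element of $V$ is a root, $D$ must be a scalar multiple of $\prod_{v\in V}(x-v)$. Your inductive argument via the coset decomposition $V=\bigsqcup_{\lambda\in\F_q}(W+\lambda v_n)$ is the classical alternative, and every step checks out: the polynomial identity $g(x-\lambda v_n)=g(x)-\lambda g(v_n)$ holds because $(x-c)^{q^i}=x^{q^i}-c^{q^i}$ in characteristic $p$ and $\lambda^{q^i}=\lambda$ for $\lambda\in\F_q$; the nonvanishing of $\beta=g(v_n)=\prod_{w\in W}(v_n-w)$ is exactly the statement $v_n\notin W$; and your derivation of $\prod_{\lambda\in\F_q}(y-\lambda\beta)=y^q-\beta^{q-1}y$ by scaling $\prod_{\lambda\in\F_q}(z-\lambda)=z^q-z$ is sound (each of the $q$ factors picks up a $\beta$, accounting for the $\beta^q$). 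What the determinant proof buys is a closed-form, recursion-free expression whose coefficients are ratios of Moore determinants (essentially the Dickson invariants), and it sidesteps any case analysis on dimension; what your proof buys is the structurally useful recursive factorization $f=g^q-\beta^{q-1}g$, exhibiting the subspace polynomial of $V$ as a $q$-linearized composite built from that of a hyperplane, and it is entirely elementary, requiring no appeal to the nonvanishing of a determinant. Either argument is a complete proof of the lemma.
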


\begin{proof}
Let $\alpha_1, \ldots, \alpha_n \in E$ be a basis for $V$.
Consider the polynomial in $E[x]$
\[
D(x):=\det
 \left[ \begin{array}{ccccc}
   \alpha_1&\alpha_1^q&\alpha_1^{q^2}&\cdots&\alpha_1^{q^{n}}\\
 \alpha_2&\alpha_2^q&\alpha_2^{q^2}&\cdots&\alpha_2^{q^{n}}\\
 \vdots&\vdots&\vdots&\ddots&\vdots\\
  \alpha_n&\alpha_n^q&\alpha_n^{q^2}&\cdots&\alpha_n^{q^{n}}\\
  x&x^q&x^{q^2}&\cdots&x^{q^{n}}\\
 \end{array} \right]
\]
which  will clearly be a $q$-polynomial in $x$, of $q$-degree at most $n$.
We claim that the roots of $D(x)$ are precisely the elements of $V$,
 from which it follows that $D(x)$ is a scalar multiple of  $\prod_{v\in V} (x-v)$.
 
 The claim follows by observing that each $\alpha_i$ is a root of $D(x)$,
 and since $D(x)$ is a $q$-polynomial, all $\F_q$-linear combinations of
 the $\alpha_i$  are roots of $D(x)$ by Lemma  \ref{vector_space}.
 Since $V$ has $q^n$ elements, and $D(x)$ has degree at most $q^n$,
 it follows that $D(x)$ has degree exactly $q^n$ and the proof is complete.
\end{proof}

We next consider a slight variation on the theme of $q$-polynomials. Let $s\geq 1$ be an integer and let
\[
L(x)=a_0x+a_1x^{q^s}+a_2x^{q^{2s}}+\cdots+a_m x^{q^{ms}} \in F[x],
\]
where we assume that $a_m\neq 0$. Then $L$ is a $q$-polynomial of $q$-degree $ms$ and it is also a $q^s$-polynomial. We will
not always require that $\mathbb{F}_{q^s}$ is a subfield of $F$ when $s>1$.

The following result generalizes Lemma  \ref{vector_space}.

\begin{lemma} \label{semi_linear_automorphism}
Let $F$ be a field of prime characteristic $p$ that contains $\F_q$. Let $L$ be a $q^s$-polynomial of $q^s$-degree $m$ in $F[x]$. Assume
that the coefficient of $x$ in $L$ is nonzero.
Let $E$ be a splitting field for $L$ over $F$ and let $V$ be the set of roots of $L$ in $E$. Let $G$ be the Galois group of $E$
over $F$.  
\begin{enumerate}
\item The field $\mathbb{F}_{q^s}$ is a subfield of $E$.
\item $V$ is an $m$-dimensional vector space over $\mathbb{F}_{q^s}$.
\item $G$ acts on $V$ as a group of automorphisms that are semilinear
with respect to the group of $\F_q$-automorphisms of $\mathbb{F}_{q^s}$ induced by the $q$-th power map.
\item $G$ contains a normal subgroup $H$, say, such that $H$ is a subgroup of $GL(m,q^s)$ and $G/H$ is cyclic of order dividing $s$.
\end{enumerate}
\end{lemma}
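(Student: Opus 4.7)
The plan is to treat part 1 as the crux and deduce parts 2--4 from it by bookkeeping that refines the argument of Lemma \ref{vector_space}. To prove $\F_{q^s}\subseteq E$, I would work inside an algebraic closure $\overline{E}$, where $\F_{q^s}$ is automatically available as the splitting field of $x^{q^s}-x$. For any $\lambda\in\F_{q^s}$ and any $\alpha\in V$, the identity $\lambda^{q^{is}}=\lambda$ yields
\[
L(\lambda\alpha)=\sum_{i=0}^m a_i\lambda^{q^{is}}\alpha^{q^{is}}=\lambda L(\alpha)=0,
\]
so $\lambda\alpha$ is a root of $L$ and therefore lies in $V\subseteq E$. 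Picking a nonzero $\alpha\in V$ (which exists since $L$ is separable of degree $q^{ms}\geq 2$), we get $\lambda=(\lambda\alpha)/\alpha\in E$, proving part 1.

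Given part 1, part 2 is immediate: the same calculation shows $V$ is closed under $\F_{q^s}$-scalar multiplication, so $V$ is an $\F_{q^s}$-subspace of $E$, and the cardinality equation $|V|=q^{ms}=(q^s)^m$ (from $L$ being separable of $q$-degree $ms$) forces $\dim_{\F_{q^s}}V=m$. For part 3, any $\sigma\in G$ fixes $\F_q\subseteq F$ pointwise, so $\sigma|_{\F_{q^s}}$ is an $\F_q$-automorphism of $\F_{q^s}$, hence a power of the $q$-th Frobenius; the field-automorphism identity $\sigma(\lambda\alpha)=\sigma(\lambda)\sigma(\alpha)$ is exactly the required semilinearity.

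For part 4, let $H=\mathrm{Gal}(E/F\cdot\F_{q^s})$, the kernel of the restriction map $G\to\mathrm{Gal}(\F_{q^s}/\F_q)$. Elements of $H$ fix $\F_{q^s}$ pointwise, so by part 3 they act $\F_{q^s}$-linearly on $V$, giving $H\hookrightarrow GL(m,q^s)$. Normality is automatic from being a kernel, and $G/H$ embeds into $\mathrm{Gal}(\F_{q^s}/\F_q)$, which is cyclic of order $s$, so $G/H$ is cyclic of order dividing $s$. The main obstacle is clearly part 1; once one spots the trick of testing $L(\lambda\alpha)=0$ inside $\overline{E}$ and then ``dividing out'' by a nonzero root to pull $\F_{q^s}$ into $E$, the remaining parts reduce to carefully tracking two coupled linear structures.
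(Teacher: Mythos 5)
Your proposal is correct and follows essentially the same route as the paper's proof: both establish $\F_{q^s}\subseteq E$ by showing $L(\lambda\alpha)=\lambda L(\alpha)=0$ in the algebraic closure and dividing by a nonzero root, count roots to get $\dim_{\F_{q^s}}V=m$, use that $\F_{q^s}$ is a normal (Galois-stable) subfield to obtain semilinearity, and take $H$ to be the kernel of the restriction map $G\to\mathrm{Gal}(\F_{q^s}/\F_q)$. No substantive differences.
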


\begin{proof}
It is clear from the proof of Lemma \ref{vector_space} that if $\alpha$ and $\beta$ are in $V$, so also is $\alpha+\beta$. 
Working in the algebraic closure of $F$, let $\lambda$ be an element of $\mathbb{F}_{q^s}$. Then $\lambda^{q^s}=\lambda$ and it
follows that
\[
L(\lambda\alpha)=\lambda L(\alpha)=0
\]
for all $\alpha\in V$. Thus $\lambda\alpha\in V$. Since $E$ contains $(\lambda\alpha)\alpha^{-1}=\lambda$, it follows that
$\lambda\in E$ and thus $E$ contains a copy of $\mathbb{F}_{q^s}$. Since $L$ has $q^{ms}$ different roots under the hypothesis above,
$V$ has dimension $m$ over $\mathbb{F}_{q^s}$.

The copy of $\mathbb{F}_{q^s}$ contained in $E$ is a normal subfield, since it is the splitting field of $x^{q^s}-x$ over $\F_q$ (and $F$ contains 
$\F_q$). It follows that $G$ maps  $\mathbb{F}_{q^s}$ into itself and induces a subgroup of $\F_q$-automorphisms of the field.

Following the proof of Lemma \ref{vector_space}, $G$ maps $V$ into itself and satisfies
\[
\sigma(\alpha+\beta)=\sigma(\alpha)+\sigma(\beta)
\]
for all $\sigma\in G$ and all $\alpha$ and $\beta$ in $V$. Let $\lambda$ be an element of $\mathbb{F}_{q^s}$
and $\sigma$ be an element of $G$. Then we have
\[
\sigma(\lambda\alpha)=\sigma(\lambda)\sigma(\alpha)
\]
for all $\alpha$ in $V$. This implies that $G$ acts semilinearly on $V$ with respect to the group of automorphisms it induces
of $\mathbb{F}_{q^s}$.

Finally, let $H$ be the subgroup of $G$ that acts trivially on $\mathbb{F}_{q^s}$. Standard Galois theory shows that
$H$ is normal in $G$ and $G/H$ is isomorphic to a subgroup of $\F_q$-automorphisms of $\mathbb{F}_{q^s}$. Since
the Galois group of $\mathbb{F}_{q^s}$ over $\F_q$ is cyclic of order $s$, the quotient $G/H$ is cyclic of order dividing
$s$.
\end{proof}

We remark that Lemma \ref{semi_linear_automorphism} is really only of interest when $\mathbb{F}_{q^s}$ is not a subfield
 of $F$, since otherwise the result is a repetition of Lemma \ref{vector_space}.

\section{Serre's Linearization Trick}\label{SLT}

In this paper we will sometimes  assume that $L(x)/x$ is irreducible,
which is the generic case, and will usually coincide with the Galois group being $GL(n,q)$.
In this section we will approach from a different direction, and we will 
construct $q$-polynomials $L$ such that $L(x)/x$ is not irreducible.
One can hope for more exotic Galois groups in such cases, which is indeed
the purpose of our approach. 

Given any polynomial $f\in F[x]$, it is  useful to find a $q$-polynomial
in $F[x]$ that is divisible by $f$.
The following Lemma is well known, and can be found for example in Kedlaya \cite{K}.
We give two proofs here.

\begin{lemma}\label{mind}
Let $F$ be a field of characteristic $p$ that contains $\F_q$.
For any  polynomial $f\in F[x]$ of degree $m$,
there exists  $L\in F[x]$  which is divisible by $f$ and has  the form
\[
L(x)=\sum_{i=0}^d b_i x^{q^i}
\]
where $d\le m$. 
\end{lemma}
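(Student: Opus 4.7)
The plan is to give two proofs, as the text promises. The first is a short dimension count. Consider the quotient $F$-algebra $R = F[x]/(f)$, which has $F$-dimension $m$. The $m+1$ residues of $x, x^q, x^{q^2}, \ldots, x^{q^m}$ in $R$ must be $F$-linearly dependent, yielding $b_0, \ldots, b_m \in F$, not all zero, with
\[
\sum_{i=0}^{m} b_i \, x^{q^i} \equiv 0 \pmod{f}.
\]
Letting $d$ be the largest index with $b_d \neq 0$, the polynomial $L(x) = \sum_{i=0}^d b_i x^{q^i}$ has the required form with $d \leq m$ and is divisible by $f$. There is no real obstacle in this argument.

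The second proof is geometric, built around Lemma~\ref{lin}, and works cleanly when $f$ is separable. Let $E$ be a splitting field of $f$ over $F$, let $S \subseteq E$ be the set of roots of $f$, and set $V = \mathrm{span}_{\F_q}(S)$, a finite-dimensional $\F_q$-subspace of $E$ of some dimension $d \leq |S| \leq m$. Since $F \supseteq \F_q$, every $\sigma \in \mathrm{Gal}(E/F)$ is $\F_q$-linear and permutes $S$, hence stabilizes $V$. By Lemma~\ref{lin}, $\tilde{L}(x) = \prod_{v \in V}(x - v)$ is a $q$-polynomial of $q$-degree $d$; Galois-invariance of $V$ forces $\tilde{L} \in F[x]$, and separability of $f$ gives $f \mid \tilde{L}$.

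The only genuine obstacle lies in the geometric proof when $f$ is inseparable: any $q$-polynomial $L = \sum b_i x^{q^i}$ with $b_0 \neq 0$ has $L' = b_0 \neq 0$ and is therefore separable, so an inseparable $f$ cannot be reached via the construction of Lemma~\ref{lin} with $b_0 \neq 0$. The pigeonhole argument has no such limitation---it simply produces a relation with $b_0 = 0$ when the situation demands it---so I would present it as the primary proof, keeping the geometric construction as an illustrative alternative in the separable case.
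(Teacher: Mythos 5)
Your two proofs coincide with the paper's own two proofs: the first is the same dimension count in $F[x]/(f)$ on the residues of $x^{q^i}$, and the second is the same construction of $\prod_{v\in V}(x-v)$ over the $\F_q$-span of the roots via Lemma~\ref{lin}, with Galois invariance giving coefficients in $F$. Your closing remark about inseparability is in fact a point where you are more careful than the paper, whose second proof asserts the roots are ``distinct by hypothesis'' even though no separability hypothesis appears in the statement of the lemma; your observation that the pigeonhole argument needs no such assumption, and should therefore be the primary proof, is correct.
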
 

\begin{proof}
First proof.
Consider the $m+1$ elements $x^{q^i}$ mod $f$, $0\le i \le m$, of the
$m$-dimensional $F$-vector space $F[x]/(f)$. 
There must be a nontrivial linear dependence relation between these elements, say
\[
\sum_{i=0}^m b_i ( x^{q^i} \ \text{mod} \ f)=0.
\]
This implies $(\sum_{i=0}^m b_i  x^{q^i}) \ \text{mod} \ f=0$,
and therefore  $L(x)=\sum_{i=0}^m b_i  x^{q^i}$ is the required  polynomial.

Second proof (this is essentially the proof in \cite{K}).
Let $E$ be a splitting field for $f$ and let
$\beta_1, \ldots , \beta_m$ be the roots of $f$ in $E$, which are distinct by hypothesis.
Let $V$ be the $\F_q$-span of $\beta_1, \ldots , \beta_m$.
The dimension of $V$ is at most $m$.
By Lemma \ref{lin} the polynomial 
$\prod_{v\in V} (x-v)$ is a $q$-linearized polynomial,
and it will clearly be divisible by $f$.
This polynomial has coefficients in $F$ because the coefficients are fixed by the Galois group.
\end{proof}

There may be a linear relation of smaller degree, 
which would result in an additive polynomial $L$ of smaller 
$q$-degree than $m$.
Abhyankar and Yie \cite{AY} 
say that $f$ \underline{linearizes at $d$} if there exists a 
$q$-linearized polynomial
$L$ of $q$-degree $d$ such that $f$ divides $L$.
Lemma \ref{mind} says that $f$ will linearize at an integer $d$ with $d\le m$.
If $f$ linearizes at $d$ where $d$ is significantly smaller than $m$, then useful information 
can be obtained about the Galois group of $f$,
because the Galois group of $f$ is (almost always) equal to the Galois group of $L$,
which is isomorphic to a subgroup of $GL(d,q)$.
They refer to this as Serre's linearization trick.

Our next lemma focuses on the smallest $d$ such that $f$ linearizes at $d$.

\begin{lemma}\label{mind2}
Let $F$ be a field of characteristic $p$ that contains $\F_q$.
The minimal $q$-degree of a $q$-linearized 
polynomial $L\in F[x]$  which is divisible by $f$  is 
equal to the dimension of the $\F_q$-span of the roots of $f$ in a splitting field.
\end{lemma}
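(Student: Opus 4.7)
The plan is to prove the equality by two inequalities: if $d$ denotes the dimension of the $\F_q$-span $V$ of the roots of $f$ in a splitting field, I would show both that there exists a $q$-polynomial of $q$-degree $d$ divisible by $f$, and conversely that every $q$-polynomial divisible by $f$ has $q$-degree at least $d$.

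For the upper bound, I would exhibit a specific $q$-polynomial of $q$-degree $d$ divisible by $f$. By Lemma \ref{lin}, the polynomial $D(x)=\prod_{v\in V}(x-v)$ is $q$-linearized, and its ordinary degree is $|V|=q^d$, so its $q$-degree is exactly $d$. The set of roots of $f$ is stable under $\mathrm{Gal}(\bar F/F)$, and since that Galois action is $\F_q$-linear (it fixes $\F_q\subseteq F$ pointwise), it also stabilizes the $\F_q$-span $V$ setwise; therefore the coefficients of $D$ are Galois-invariant and lie in $F$. Assuming $f$ is separable (which is automatic in the only case that matters, since a $q$-polynomial with nonzero constant term has no repeated roots), every root of $f$ lies in $V$, so $f\mid D$.

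For the lower bound, let $L(x)=a_0x+a_1x^q+\cdots+a_nx^{q^n}$ be any $q$-polynomial of $q$-degree $n$ divisible by $f$. If $a_0\neq 0$, Lemma \ref{vector_space} says the roots of $L$ in a splitting field form an $\F_q$-vector space $W$ of dimension $n$; since every root of $f$ lies in $W$ and $W$ is $\F_q$-closed, we get $V\subseteq W$ and hence $d\leq n$.

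The one subtle case, and what I expect to be the only real obstacle, is $a_0=0$: then Lemma \ref{vector_space} does not apply directly, and $L$ may have repeated roots. I would handle it by letting $j$ be the smallest index with $a_j\neq 0$ and setting $\tilde L(y)=a_jy+a_{j+1}y^q+\cdots+a_ny^{q^{n-j}}$, which is a $q$-polynomial of $q$-degree $n-j$ with nonzero coefficient of $y$ and satisfies $L(x)=\tilde L(x^{q^j})$. For every root $\beta$ of $f$, $\tilde L(\beta^{q^j})=0$, so $\beta^{q^j}$ lies in the root space $\tilde W$ of $\tilde L$. The Frobenius $x\mapsto x^{q^j}$ is an injective $\F_q$-linear map of $\bar F$, so the image of $V$ has dimension $d$ and is contained in $\tilde W$; applying Lemma \ref{vector_space} to $\tilde L$ gives $d\leq n-j\leq n$, completing the proof.
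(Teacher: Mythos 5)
Your proof is correct, and the upper bound is exactly the paper's: both of you invoke Lemma \ref{lin} to produce $D(x)=\prod_{v\in V}(x-v)$, a $q$-polynomial of $q$-degree $d=\dim_{\F_q}V$ with Galois-invariant (hence $F$-rational) coefficients that $f$ divides, and both of you quietly need $f$ to be separable for the step $f\mid D$ (the paper makes the same implicit assumption). Where you diverge is the lower bound. The paper argues by divisibility: the root set of \emph{any} $q$-linearized polynomial is closed under $\F_q$-linear combinations (no hypothesis on $a_0$ is needed for that, only for the dimension count in Lemma \ref{vector_space}), so any linearized multiple $L$ of $f$ has all of $V$ among its roots, whence the squarefree polynomial $D$ divides $L$ and $q^d=\deg D\leq\deg L$. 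You instead bound dimensions: you embed $V$ into the root space of $L$ and invoke the dimension statement of Lemma \ref{vector_space}, which forces you into a case split when $a_0=0$, resolved by writing $L(x)=\tilde L(x^{q^j})$ and pushing $V$ through the injective $\F_q$-linear map $x\mapsto x^{q^j}$. Your route is sound and has the virtue of confronting the repeated-root case explicitly, but the paper's observation that $\F_q$-closure of the root set holds unconditionally makes the divisibility argument shorter and uniform, with no case analysis.
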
 

\begin{proof}
Let $f$ be a polynomial of degree $m$, with coefficients in the field $F$.
Let $d$ be the smallest positive integer such that there exists a 
$q$-linearized polynomial $L\in F[x]$ of $q$-degree $d$ which is divisible by $f$.
By Lemma \ref{mind}, $d\le m$.

Let $E$ be a splitting field for $f$.  The set of all 
$\F_q$-linear combinations of the roots of $f$  is
an $\F_q$-vector space $V \subseteq E$. 
Let $L'$ be the monic polynomial whose  roots
are precisely the elements of $V$. 
Then $L'$ must be a linearized polynomial by Lemma \ref{lin}, and $f$ 
divides $L'$ because its roots are a subset of the roots of $L'$. 
Let $d'$ be the $q$-degree of $L'$. Then $d'\ge d$ by definition of $d$.
If $d'>d$ then there would exist a linearized polynomial $L$ of degree $d$
that contains the roots of $f$ among its roots. 
Since $L$ is linearized, it also contains every linear combination of the roots of $f$
among its roots. Then $L'$ would divide $L$, which contradicts $d'>d$.
Therefore $d'=d$.
\end{proof}

%If $f$ linearizes at $d$, and $f$ does not linearize at any integer smaller than $d$, we say that $f$ \underline{minimally linearizes at $d$}.

{\bf Example:} (from \cite{CMT})
Consider the polynomial $f=x^{24}+x+t$ over $\F_2(t)$.
Using Magma  we find 
that $f$  linearizes at 12 and we find
the following linearized polynomial divisible by $f$:
\[
L(x)=x^{4096} + (t^{24} + t)x^{2048} + t^{128}x^{1024} + (t^{88} + t^{65})x^{512} + t^{16}x^{32} +
\]
\[
    t^9x^{16} + (t^{40} + t^{17})x^8 + x^2 + (t^{24} + t)x.
\]
The degrees of the irreducible factors of $L$ are
1,
24,
276,
1771,
2024.

{\bf Remark:}
An affine polynomial is a polynomial of the form $L(x)+c$ where $L$ is a linearized polynomial and $c\in F$.
It is possible that $f$ divides an affine polynomial of smaller $q$-degree than the minimal
positive integer at which $f$ linearizes. 
In the example above, $f$ minimally  linearizes at 12, however there exists an affine polynomial
of 2-degree 11 which is divisible by $f$.
It is the product of $f$ and the degree 2024 irreducible factor.

\section{Constructions related to the Space of Roots}\label{CSR}

\noindent We continue with the notation and themes of the previous section. 
Let $L$ be a $q$-polynomial of $q$-degree $n$ in $F[x]$. We assume that $L$ has $q^n$ different roots. These form the space of roots 
$V$, which is an $n$-dimensional vector space
over $\F_q$. The Galois group $G$ of $L$ acts $\F_q$-linearly on $V$, under the assumption that $F$ contains $\F_q$.

Associated to the vector space $V$ are such vector spaces as the symmetric powers of $V$, which we wish to consider in this section.
Let $x_1$, \dots, $x_n$ be $n$ algebraically independent indeterminates over $\F_q$ and let $A(\F_q)=\F_q[x_1, \ldots, x_n]$ be the ring
of polynomials in the $x_i$. As is well known, the general linear group $GL(n,q)$ acts on $A(\F_q)$, and $A(\F_q)$ is an $\F_q$-module for
the group. We may explain this idea as follows.

We identify $GL(n,q)$ with the group of invertible $n\times n$ matrices over $\F_q$. Given an element $g$ in $GL(n,q)$, write
$g$ as an $n\times n$ matrix $(a_{ij})$, $1\leq i,j\leq n$. Then we set
\[
gx_i=a_{i1}x_1+\cdots +a_{in}x_n
\]
for $1\leq i\leq n$. This linear action is extended to powers of the $x_i$, so that, for example, $g$ sends $x_i^r$ to $(gx_i)^r$ for
each positive integer $r$. The action is extended to monomials in the $x_i$
in the obvious way. Finally we extend the action to polynomials: given $P\in A(\F_q)$ and $g$ in $GL(n,q)$, we define
$P^g$ by 
\[
P^g(x_1, \ldots, x_n)=P(gx_1, \ldots, gx_n).
\]
Clearly, $P^g$ is a polynomial of the same degree as $P$.

{\bf Definition}
For each positive integer $r$, let $H_{n,r}(\F_q)$ denote the subspace of $A(\F_q)$  
consisting of all homogeneous
polynomials of degree $r$ in the $x_i$, along with the zero polynomial. 

As is well known, 
$H_{n,r}(\F_q)$ is a $GL(n,q)$-submodule of $A$ (in other words, $GL(n,q)$ maps the space of homogeneous polynomials of degree $r$ into itself).
Of course, these concepts hold for any field, not just $\F_q$, 
and we will work with corresponding spaces of polynomials
defined over $F$ in the next section. We note the dimension formula
\[
\dim_{\F_q}H_{n,r}(\F_q)=\binom{n+r-1}{r},
\]
where the expression on the right is the binomial coefficient. 

As before, let $E$ denote a splitting field for $L$ over $F$. Let $v_1$, \dots, $v_n \in E$ 
be an $\F_q$-basis for $V$. Consider the
mapping $\epsilon_r:H_{n,r}(\F_q)\longrightarrow E$ defined by
\[
\epsilon_rP(x_1, \ldots, x_n)=P(v_1, \ldots, v_n).
\]
Thus $\epsilon_r$ evaluates a homogeneous polynomial of degree $r$ on the basis of $V$ and hence determines an element of $E$. 
Note that $\epsilon_r$ is an $\F_q$-linear mapping. 
Let
$\epsilon_r(V)$ denote the $\F_q$-subspace of $E$ spanned by the image of $\epsilon_r$. 
As we shall show in the next lemma, this subspace
is independent of the choice of basis and thus the notation $\epsilon_r(V)$ makes sense. 

The following lemma is surely well known but we include a proof for definiteness.

\begin{lemma} \label{independent_of_basis}
Let $v_1$, \dots, $v_n$ and $w_1$, \dots, $w_n$ be two ordered bases of $V$ and let $\epsilon_r$ 
and $\epsilon_r'$ be the corresponding
maps from $H_{n,r}(\F_q)$ to $E$ determined by the bases. Then the $\F_q$-subspaces of $E$ spanned by the images of $\epsilon_r$ and $\epsilon_r'$
are identical. Thus we can speak unambiguously of the subspace $\epsilon_r(V)$ contained in $E$.
\end{lemma}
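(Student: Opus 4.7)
The plan is to reduce the statement to the fact that $H_{n,r}(\F_q)$ is a $GL(n,q)$-module, using the change-of-basis matrix as the intertwining group element.

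First, since $v_1,\dots,v_n$ and $w_1,\dots,w_n$ are both $\F_q$-bases of the same space $V$, there is a unique matrix $g=(a_{ij})\in GL(n,q)$ expressing one in terms of the other, say $w_i=\sum_{j=1}^n a_{ij}v_j$ for $1\le i\le n$. With the conventions fixed before the lemma statement, this is exactly $w_i = gv_i$, where $gv_i$ refers to the linear action of $GL(n,q)$ on the indeterminates transported to the $v_j$.

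Second, I would compute $\epsilon_r'(P)$ for an arbitrary $P\in H_{n,r}(\F_q)$ by substitution:
\[
\epsilon_r'(P) \;=\; P(w_1,\dots,w_n) \;=\; P(gv_1,\dots,gv_n) \;=\; P^g(v_1,\dots,v_n) \;=\; \epsilon_r(P^g),
\]
where the third equality is just the definition of $P^g$ evaluated at the $v_i$. Thus the diagram $\epsilon_r' = \epsilon_r\circ(P\mapsto P^g)$ commutes on $H_{n,r}(\F_q)$.

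Third, because $H_{n,r}(\F_q)$ is a $GL(n,q)$-module, the map $P\mapsto P^g$ is an $\F_q$-linear bijection of $H_{n,r}(\F_q)$ onto itself (its inverse is $P\mapsto P^{g^{-1}}$). Consequently the images of $\epsilon_r$ and $\epsilon_r'$ as subsets of $E$ coincide, and in particular their $\F_q$-spans coincide. By symmetry (exchanging the roles of the two bases, i.e.\ replacing $g$ by $g^{-1}$) there is nothing more to check.

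There is no real obstacle here; the only care needed is to align the matrix convention used in the paper's definition of $P^g$ with the change-of-basis matrix, which is straightforward once written out as above.
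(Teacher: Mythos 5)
Your proposal is correct and follows essentially the same route as the paper: both identify the change-of-basis element $g\in GL(n,q)$, observe that $P^g(v_1,\ldots,v_n)=P(w_1,\ldots,w_n)$ so that $\epsilon_r'=\epsilon_r\circ(P\mapsto P^g)$, and conclude from the bijectivity of $P\mapsto P^g$ on $H_{n,r}(\F_q)$ that the two images span the same subspace. Your write-up is if anything slightly more explicit about the convention-matching, but there is no substantive difference.
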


\begin{proof}
As we have two $\F_q$-bases of $V$, there is a unique $g\in GL(n,q)$ with
$gv_i=w_i$ for $1\leq i\leq n$. Given $P\in H_{n,r}(\F_q)$, we gave
\[
P^g(v_1, \ldots, v_n)=P(w_1, \ldots, w_n).
\]
Since $g$ is invertible, $P^g$ runs over $H_{n,r}(\F_q)$ as $P$ runs over $H_{n,r}(\F_q)$. 
It follows that the images of $\epsilon_r$
and $\epsilon_r'$ span the same subspace of $E$.
\end{proof}

The question we wish to raise is the following. Given a $q$-polynomial $L$ and space of roots $V$ of $L$, for what values
of $r$ is $\epsilon_r$ injective? Because, if $\epsilon_r$ is injective, then
we can identify $\epsilon_r(V)$ as the $r$-th symmetric power
of $V$, where
we recall that the $r$-th symmetric power of a vector space $V$ is the subspace 
of the symmetric algebra of $V$ consisting of all degree $r$ elements under the tensor product.
We will show later by a trivial argument (Theorem \ref{q+1_not_injective}) that if $F$ is the finite field $\F_q$,
$\epsilon_r$ is not injective if $r\geq q+1$ but it is injective if $r<q$. 
We note on the other hand that if $F$ is infinite,
the possibility exists that $\epsilon_r$ is injective for all $r$.
We shall see some examples of this.

\section{Projective Polynomials and Symmetric Powers}\label{proj_symm}

We recall the setup from earlier sections, except that we will assume $q=p$ in this section.

Let $F$ be a field of prime characteristic $p$. Let $L$ be a $p$-polynomial of $p$-degree 
$n$ in $F[x]$.
Let $E$ be a splitting field for $L$ over $F$ and let $V_L$ be the set of roots of $L$ in $E$. 
Let $G$ be the Galois group of $E$
over $F$. 
We assume that the roots of $L$ are distinct.
Let $M(x)=L(x)/x$.
Then it is easy to see that $M(x)$ is a monic polynomial in $x^{p-1}$,
say $M(x)=P(x^{p-1})$.
The polynomial $P(x)$ is monic of degree $(p^n-1)/(p-1)$ and is called the
projective polynomial associated to $L$.

\begin{lemma}\label{multi}
Let $L_P$ be the linearized polynomial of smallest degree that is divisible by $P$,
as in  Lemma \ref{mind2}.
Let $\alpha_1, \ldots , \alpha_n$ be a basis for $V_L$.
%Let $r \in \{1,2,3,\ldots ,p-1\}$.
Then the elements
$\alpha_1^{k_1}\alpha_2^{k_2}\cdots \alpha_n^{k_n}$ 
where $k_1+\cdots +k_n =p-1$ (and the $k_j$ are nonnegative integers) 
are roots of $L_P$.
\end{lemma}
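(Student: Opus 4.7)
The plan is to realize each monomial $\alpha_1^{k_1}\cdots\alpha_n^{k_n}$ as an $\F_p$-linear combination of roots of $L_P$; the conclusion will then follow because the roots of any linearized polynomial form an $\F_p$-vector space (Lemma \ref{vector_space}). Since $L(x)/x = P(x^{p-1})$, the roots of $P$ are exactly the elements $\beta^{p-1}$ for $\beta \in V_L\setminus\{0\}$, and $L_P$ is divisible by $P$, so every such $(p-1)$-th power is already a root of $L_P$. Because every nonzero $\beta\in V_L$ can be written as $c_1\alpha_1+\cdots+c_n\alpha_n$ with $(c_1,\ldots,c_n)\in\F_p^n\setminus\{0\}$, the problem reduces to expressing each monomial $\alpha_1^{k_1}\cdots\alpha_n^{k_n}$ as an $\F_p$-linear combination of $(p-1)$-th powers of such expressions.

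The mechanism I would employ is the classical polarization identity, which holds over $\Z$ and hence over any commutative ring:
\[
d!\, y_1 y_2 \cdots y_d \;=\; \sum_{\emptyset\neq T\subseteq\{1,\ldots,d\}} (-1)^{d-|T|}\Bigl(\sum_{i\in T} y_i\Bigr)^{\!d}.
\]
Setting $d=p-1$, Wilson's theorem gives $(p-1)!\equiv -1\pmod{p}$, so the left-hand coefficient is invertible in $\F_p$. Solving for $y_1\cdots y_{p-1}$ displays this multilinear monomial as an explicit $\F_p$-combination of $(p-1)$-th powers of subset sums of the $y_i$.

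Finally I specialize by setting the first $k_1$ of the $y_i$ equal to $\alpha_1$, the next $k_2$ equal to $\alpha_2$, and so on; this is possible because $\sum_j k_j=p-1$. The left-hand side becomes $\alpha_1^{k_1}\cdots\alpha_n^{k_n}$, while each term on the right becomes $(c_1\alpha_1+\cdots+c_n\alpha_n)^{p-1}$, where $c_j\in\{0,1,\ldots,k_j\}\subseteq\{0,1,\ldots,p-1\}$ counts how many of the indices in $T$ were assigned to $\alpha_j$. Because $T\neq\emptyset$ and the $\alpha_j$ are $\F_p$-linearly independent, the vector $(c_1,\ldots,c_n)$ is nonzero and the inner expression is a nonzero element of $V_L$, whose $(p-1)$-th power is a root of $L_P$. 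This displays $\alpha_1^{k_1}\cdots\alpha_n^{k_n}$ as an $\F_p$-linear combination of roots of $L_P$, and the conclusion follows. The only delicate point is the appeal to Wilson's theorem to keep $(p-1)!$ invertible in $\F_p$; since $p$ is prime this is automatic, so no serious obstacle arises.
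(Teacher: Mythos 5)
Your proof is correct, but it takes a genuinely different route from the paper's. Both arguments reduce the lemma to the same core fact --- that each monomial $\alpha_1^{k_1}\cdots\alpha_n^{k_n}$ with $\sum_j k_j=p-1$ is an $\F_p$-linear combination of elements $(c_1\alpha_1+\cdots+c_n\alpha_n)^{p-1}$ with $(c_1,\ldots,c_n)\in\F_p^n$ nonzero, each of which is a root of $P$ and hence of $L_P$, whose root set is an $\F_p$-space. The paper establishes this core fact indirectly: it expands all the powers $(i_1\alpha_1+\cdots+i_n\alpha_n)^{p-1}$ by the multinomial theorem, observes that the multinomial coefficients $\binom{p-1}{k_1,\ldots,k_n}$ are nonzero, and argues that the resulting $p^n\times\binom{n+p-2}{p-1}$ coefficient matrix $(i_1^{k_1}\cdots i_n^{k_n})$ has full column rank by a (generalized) Vandermonde argument, so the system can be inverted. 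You instead write down an explicit inverse via the polarization identity
\[
(p-1)!\,y_1\cdots y_{p-1}\;=\;\sum_{\emptyset\neq T\subseteq\{1,\ldots,p-1\}}(-1)^{p-1-|T|}\Bigl(\sum_{i\in T}y_i\Bigr)^{p-1},
\]
specialized by repeating each $\alpha_j$ exactly $k_j$ times; the multiplicities $c_j=|T\cap(\text{block }j)|$ satisfy $0\le c_j\le k_j\le p-1$, so the coefficient vector is genuinely nonzero in $\F_p^n$ whenever $T\neq\emptyset$, which you correctly note. Your approach buys two things: it is fully explicit (no rank computation, and in particular it avoids the paper's somewhat loose claim that deleting rows of a multivariate Vandermonde-type matrix ``at the bottom'' leaves an invertible square matrix), and the only arithmetic input is that $(p-1)!$ is a unit in $\F_p$ --- for which Wilson's theorem is more than you need; $(p-1)!\not\equiv 0\pmod p$ suffices, and the same observation shows your argument extends verbatim to exponent $r$ for any $1\le r\le p-1$, matching the paper's remark that the lemma holds with $p-1$ replaced by a divisor $r$ of $p-1$. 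The paper's approach, conversely, makes visible exactly which linear-algebraic fact about functions on $\F_p^n$ is being used, which is the perspective its later module-theoretic sections build on.
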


\begin{proof}

Given notation above, 
let $\alpha_1, \ldots , \alpha_n$ be the roots of $L$ in $E$.
We know that if $\beta$ is any root of $L$ then $\beta^{p-1}$ is a root of $P$,
so the elements $(i_1\alpha_1 + i_2 \alpha_2 + \cdots + i_n \alpha_n)^{p-1}$ 
are roots of $P$, for any $i_1, \ldots ,i_n \in \F_p$.
We claim that each  $\alpha_1^{k_1}\alpha_2^{k_2}\cdots \alpha_n^{k_n}$ 
where $k_1+\cdots +k_n =p-1$ (and the $k_j$ are nonnegative integers) 
can be written as an $\F_p$-linear combination 
of the elements $(i_1\alpha_1 + i_2 \alpha_2 + \cdots + i_n \alpha_n)^{p-1}$,
and therefore they are roots of $L_P$.

By the Multinomial Theorem we write
\[
(i_1\alpha_1 + i_2 \alpha_2 + \cdots + i_n \alpha_n)^{p-1}= 
\sum_{k_1+\cdots +k_n=p-1} \binom{p-1}{k_1,\ldots,k_n} 
i_1^{k_1}\cdots i_n^{k_n} \alpha_1^{k_1}\cdots \alpha_n^{k_n}
\]
which gives us a system of  linear equations 
\[
\left(
\begin{array}{c}
\vdots \\
(i_1\alpha_1  + \cdots + i_n \alpha_n)^{p-1}\\
\vdots \\
\end{array}
\right)=
\left(
\begin{array}{ccc}
  \vdots& \vdots & \vdots\\
  \hdots   &i_1^{k_1}\cdots i_n^{k_n}  \binom{p-1}{k_1,\ldots,k_n}  &\hdots \\
   \vdots & \vdots & \vdots\\
\end{array}
\right)
\left(
\begin{array}{c}
\vdots \\
 \alpha_1^{k_1}\cdots \alpha_n^{k_n}\\
\vdots \\
\end{array}
\right)
\]
where the coefficient matrix has rows labelled by $(i_1,\ldots ,i_n)$
and columns labelled $(k_1,\ldots ,k_n)$.
In the column labelled $(k_1,\ldots ,k_n)$ every  term contains $ \binom{p-1}{k_1,\ldots,k_n}$
so this nonzero factor may be taken out. The remaining matrix is
\[
\left(
\begin{array}{ccc}
  \vdots& \vdots & \vdots\\
  \hdots   &(i_1^{k_1}\cdots i_n^{k_n})   &\hdots \\
   \vdots & \vdots & \vdots\\
\end{array}
\right)
\]
which is a non-square van der Monde matrix, having $p^n$ rows and 
$\binom{n+p-2}{p-1}$ columns.
Removing $p^n-\binom{n+p-2}{p-1}$ rows at the bottom  leaves a square van der Monde matrix,
which is invertible, and this proves the claim.
\end{proof}

%This argument proves that the evaluation map is injective, regardless of the Galois group of $L$.
We remark that
the same argument works when $p-1$ is replaced by any divisor $r$ of $p-1$,
i.e., where $P$ is defined by $L(x)/x=P(x^r)$.
An important point is that the multinomial coefficients
$ \binom{r}{k_1,\ldots,k_n}$ are nonzero when $1\le r \le p-1$.
This is one reason why we assume that $q=p$;
extending this lemma from $p$ to $q=p^a$ is not immediate.
A  generalization seems to be possible, it will involve the $p$-adic representation of 
divisors of $q-1$, and twisted tensor products using a Frobenius action.

\begin{cor}\label{space_roots_hom}
Let $F$ be a field of prime characteristic $p$. Let $L$ be a $p$-polynomial of $p$-degree 
$n$ in $F[x]$ with no repeated roots.
Let $E$ be a splitting field for $L$ over $F$ and let $V_L$ be the set of roots of $L$ in $E$. 
Let $r$ be a divisor of $p-1$.
Let $P(x)$ be defined by  $L(x)/x=P(x^{r})$.
Let $L_P$ be the linearized polynomial of smallest degree that is divisible by $P$,
as in  Lemma \ref{mind2}.
Then the space of roots of $L_P$ is a homomorphic image of the 
$r$-th symmetric power of $V_L$.
\end{cor}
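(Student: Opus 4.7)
The plan is to realize $V_P$ as a quotient of $\mathrm{Sym}^r(V_L)$ by constructing an explicit $\F_p$-linear surjection. Fix an $\F_p$-basis $\alpha_1,\ldots,\alpha_n$ of $V_L$; the universal property of the symmetric power, applied to the multiplication map $V_L\times\cdots\times V_L\to E$, provides a canonical $\F_p$-linear map $\phi\colon\mathrm{Sym}^r(V_L)\to E$ sending the symmetric product $\alpha_1^{k_1}\cdots\alpha_n^{k_n}$ (with $k_1+\cdots+k_n=r$) to the corresponding product in $E$. In this basis, $\phi$ coincides with the evaluation map $\epsilon_r$ of Section \ref{CSR}, and Lemma \ref{independent_of_basis} guarantees that the image is independent of the basis chosen.

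The first step is to verify that $\phi$ takes values in $V_P$. Here I would appeal to Lemma \ref{multi} together with the remark following it: although that lemma is stated for the exponent $p-1$, the same argument applies verbatim for any divisor $r$ of $p-1$, because the multinomial coefficients $\binom{r}{k_1,\ldots,k_n}$ remain nonzero modulo $p$ in that range. The van der Monde inversion in the proof expresses each monomial $\alpha_1^{k_1}\cdots\alpha_n^{k_n}$ of total degree $r$ as an $\F_p$-linear combination of elements of the form $(i_1\alpha_1+\cdots+i_n\alpha_n)^r=\beta^r$, each of which is a root of $P$ and hence of $L_P$. Linearity then places the entire image of $\phi$ inside $V_P$.

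Next I would establish surjectivity using Lemma \ref{mind2}, whose proof identifies $L_P$ with the linearized polynomial whose roots are precisely the $\F_p$-span of the roots of $P$, so $V_P$ equals that span. Every root of $P$ has the form $\beta^r$ for some $\beta\in V_L\setminus\{0\}$; writing $\beta=i_1\alpha_1+\cdots+i_n\alpha_n$, the multinomial theorem yields
$$\beta^r=\sum_{k_1+\cdots+k_n=r}\binom{r}{k_1,\ldots,k_n}\,i_1^{k_1}\cdots i_n^{k_n}\,\alpha_1^{k_1}\cdots\alpha_n^{k_n},$$
which is manifestly in the image of $\phi$. Since the elements $\beta^r$ generate $V_P$ over $\F_p$, $\phi$ surjects onto $V_P$, and the corollary follows.

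The step I expect to require the most care in writing up is the extension of Lemma \ref{multi} from the exponent $p-1$ to an arbitrary divisor $r$ of $p-1$. The paper only mentions this in a one-line remark, so I would want to confirm that the square van der Monde submatrix of size $\binom{n+r-1}{r}$ extracted in the original proof has an analogue for general $r$ whose determinant is still nonzero modulo $p$, and that the nonvanishing of the multinomial coefficients is precisely what makes this work. Everything else reduces to bookkeeping with the multinomial theorem and the identification of $V_P$ with the $\F_p$-span of the roots of $P$ supplied by Lemma \ref{mind2}.
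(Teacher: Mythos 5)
Your proof is correct and follows essentially the same route as the paper, whose entire argument is the one-line observation that Lemma \ref{multi} (extended to divisors $r$ of $p-1$ via the remark following it) shows $\epsilon_r$ maps onto the space of roots of $L_P$. You in fact supply more detail than the paper does, in particular the explicit surjectivity argument (every root of $P$ is some $\beta^{r}$ with $\beta\in V_L$, expand by the multinomial theorem, and invoke Lemma \ref{mind2} to identify $V_P$ with the $\F_p$-span of the roots of $P$), which the paper asserts without spelling out.
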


\begin{proof}
From Lemma \ref{multi} the evaluation map $\epsilon_r$
maps surjectively, but perhaps not injectively,  into the space  of roots of $L_P$.
\end{proof}

{\bf Example:} let us choose a linearized polynomial $L$ of $p$-degree 2
such that the necessary hypotheses  hold.
Let $V_L$ be the space of roots of $L$, with basis $\alpha$ and $\beta$.
The $(p-1)$-th symmetric power of $V_L$ has dimension $p$.
%By the theorem of Doty, this is an irreducible module for $G_L$.
By Lemma \ref{multi} the space  of roots of $L_P$,
which is spanned by
$\alpha^i \beta^{p-1-i}$, $0\le i \le p-1$,
is a homomorphic image of the $(p-1)$-th symmetric power of $V_L$.
%The roots of $P$ are $(\alpha +i \beta)^{p-1}$, $i\in \F_p$, and $\beta^{p-1}$.

In any case that $\epsilon_r$ is injective, we may conclude that 
the space of roots of $L_P$ is \emph{isomorphic} to the 
$r$-th symmetric power of $V_L$.
For this reason, the next sections investigate the injectivity of $\epsilon_r$.

\section{Representation Theory and Modules}\label{RTM}

In this section we discuss one hypothesis that will guarantee the injectivity of $\epsilon_r$.

Recall the notation of the evaluation mapping $\epsilon_r$ from the space $H_{n,r}(\F_q)$
of homogeneous polynomials of degree $r$ into the splitting field $E$ of some $q$-polynomial $L$ of $q$-degree $n$ in $F[x]$. Similarly we 
let $\eta_r$ denote the evaluation mapping from $H_{n,r}(F) \longrightarrow E$.

Suppose that $L$ has no repeated roots and $G$ is the Galois group of $E$ over $F$. Then $G$  acts on the $\F_q$-space $H_{n,r}(\F_q)$ and on the $F$-space $H_{n,r}(F)$. (We only use the
fact that $G$ acts linearly on $V$ and on $F$.) It is easy to see that $\ker \epsilon_r$ is an $\F_q G$-submodule of $H_{n,r}(\F_q)$
and similarly, $\ker \eta_r$  is an $FG$-submodule of $H_{n,r}(F)$. If we know enough about the actions of $G$ on the two spaces of polynomials, we may be able to deduce something about the respective kernels.
In particular,  if $H_{n,r}(\F_q)$  is an irreducible module, then 
$\ker \epsilon_r$ is trivial.
This is one way to guarantee an injective $\epsilon_r$, the irreducibility of the module.
We state this as a Corollary.

\begin{cor}\label{space_roots_hom_irred}
Let $F$ be a field of prime characteristic $p$. Let $L$ be a $p$-polynomial of $p$-degree 
$n$ in $F[x]$ with no repeated roots.
Let $E$ be a splitting field for $L$ over $F$ and let $V_L$ be the set of roots of $L$ in $E$. 
Let $G$ be the Galois group of $E$ over $F$.
Let $r$ be a divisor of $p-1$.
Suppose that the $r$-th symmetric power of $V_L$ is an irreducible $FG$-module.
Let $P(x)$ be defined by  $L(x)/x=P(x^{r})$.
Let $L_P$ be the linearized polynomial of smallest degree that is divisible by $P$,
as in  Lemma \ref{mind2}.
Then the 
$r$-th symmetric power of $V_L$ is isomorphic to the space of roots of $L_P$.
\end{cor}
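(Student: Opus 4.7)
The plan is to realize the isomorphism as a direct consequence of Corollary \ref{space_roots_hom} combined with the irreducibility hypothesis and Schur-style reasoning. The surjection is already in hand; what needs to be verified is that it respects the $FG$-module structure, after which irreducibility forces injectivity.

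Concretely, I would first fix an $\F_q$-basis $v_1,\ldots,v_n$ of $V_L$ and, via this basis, identify the $r$-th symmetric power of $V_L$ (tensored up to $F$, if one insists on coefficients in $F$) with $H_{n,r}(F)$. The evaluation map $\eta_r\colon H_{n,r}(F)\to E$ sending $P(x_1,\ldots,x_n)\mapsto P(v_1,\ldots,v_n)$ is $F$-linear, and by Lemma \ref{multi} together with the proof of Corollary \ref{space_roots_hom}, its image coincides with (the $F$-span of) the space of roots of $L_P$. Thus we have a surjective $F$-linear map
\[
\phi\colon \mathrm{Sym}^r(V_L)\twoheadrightarrow V_{L_P}.
\]

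Next I would check that $\phi$ is $FG$-equivariant. For $\sigma\in G$, since $\sigma$ acts $\F_q$-linearly on $V_L$, there is a matrix $(a_{ij})\in GL(n,q)$ with $\sigma v_i=\sum_j a_{ij}v_j$. Using that $\sigma$ is a ring homomorphism fixing $F$ pointwise, one gets
\[
\sigma\bigl(P(v_1,\ldots,v_n)\bigr)=P(\sigma v_1,\ldots,\sigma v_n)=P^{\sigma}(v_1,\ldots,v_n),
\]
so $\eta_r$ intertwines the $G$-action on $H_{n,r}(F)$ (defined in Section \ref{CSR}) with the Galois action on $E$. Consequently $\ker\phi$ is an $FG$-submodule of $\mathrm{Sym}^r(V_L)$.

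Now the irreducibility hypothesis takes over: $\ker\phi$ is either $0$ or all of $\mathrm{Sym}^r(V_L)$. Since $\phi(v_1^{r})=v_1^{r}\neq 0$ in $E$ (as $v_1\neq 0$ and $E$ is a field), $\phi$ is not the zero map, so $\ker\phi=0$ and $\phi$ is the sought $FG$-isomorphism. There is no real obstacle here beyond bookkeeping; the only subtlety worth pausing over is the interpretation of ``the $r$-th symmetric power of $V_L$ as an $FG$-module,'' which is handled by the basis identification with $H_{n,r}(F)$ described above (equivalently, by tensoring $V_L$ up to $F$ inside $E$). Once that interpretation is fixed, the argument reduces to a one-line application of the irreducibility of the source and the non-vanishing of the target.
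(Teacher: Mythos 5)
Your proposal is correct and follows essentially the same route as the paper: both use the surjection from Corollary \ref{space_roots_hom}, observe that the kernel of the evaluation map is a $G$-submodule (which the paper establishes in the discussion at the start of Section \ref{RTM}), and invoke irreducibility plus the nonvanishing of the map to conclude the kernel is trivial. You merely spell out the equivariance computation and the basis identification that the paper leaves implicit.
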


\begin{proof}
Since the kernel of $\epsilon_r$ is an $FG$-submodule of the 
$r$-th symmetric power of $V_L$, and this module is irreducible, the Corollary
follows from Corollary \ref{space_roots_hom}.
\end{proof}

In the cases that the Galois group is $GL(n,q)$ or $SL(n,q)$, 
it follows\footnote{using a theorem of Steinberg, which states that an irreducible module over
the algebraic closure of $\F_q$ remains irreducible upon restriction to $\F_q$}
from a theorem of Doty \cite{Doty}  that $H_{n,r}(\F_q)$ is an irreducible module
for $1\le r \le p-1$, or equivalently, that the $r$-th symmetric power
is an irreducible module for  $1\le r \le p-1$.

As a sample application of this Corollary, we take the example of 
a $p$-linearized polynomial over $\F_p(t)$ of $p$-degree $n$ with Galois group $GL(n,p)$.
This is the generic case, and an explicit example is
$L(x)=x^{p^n}+x^p+tx$ (due to Abhyankar).
Then $V_L$ has dimension $n$ over $\F_p$.
The $(p-1)$-th symmetric power of $V_L$ has dimension $\binom{n+p-2}{p-1}$.
We let $L_P$ be the linearized polynomial of smallest degree that is divisible by $P$,
as in  Lemma \ref{mind2}.
By Doty's theorem we know that the $(p-1)$-th symmetric power of $V_L$
is an irreducible $GL(n,p)$-module.
By Corollary \ref{space_roots_hom_irred} the
$(p-1)$-th symmetric power of $V_L$ is isomorphic to
 the space of roots of $L_P$.

As an example, let us take $n=2$ and $F=\F_p(t)$.
Choose a linearized polynomial $L$ of $p$-degree 2
having Galois group $G_L=SL(2,p)$ or $GL(2,p)$.
Let $V_L$ be the natural 2-dimensional module for $G_L$,
i.e.,  the space of roots of $L$.
Let $\alpha$ and $\beta$ be a basis.
The $(p-1)$-th symmetric power of $V_L$ has dimension $p$.
By the theorem of Doty, this is an irreducible module for $G_L$.
By Lemma \ref{multi}, and  Theorem  \ref{q-degree_2}, we can realize this module 
inside the splitting field of $L$ as the space  of roots of $L_P$,
which has a basis
$\alpha^i \beta^{p-1-i}$, $0\le i \le p-1$.
%The roots of $P$ are $(\alpha +i \beta)^{p-1}$, $i\in \F_p$, and $\beta^{p-1}$.

\section{Injectivity}\label{inj_sect}

We note the following simple principle relating to non-injectivity of $\epsilon_r$.

\begin{lemma} \label{generically_noninjective} 
Let $L$ be a $q$-polynomial in $F[x]$ and suppose that $L$ has no repeated roots in its splitting field
$E$ over $F$. Suppose that $\epsilon_r:H_{n,r}(\F_q)\to E$ 
is not injective. Then $\epsilon_t:H_{n,t}(\F_q)\to E$ is not injective for
all $t\geq r$.
\end{lemma}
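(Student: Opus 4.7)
The plan is to reduce the claim to a single algebraic observation: $\F_q[x_1,\ldots,x_n]$ is an integral domain, and multiplication by any nonzero homogeneous polynomial preserves both nonvanishing of the product as a polynomial and vanishing of the product at the point $(v_1,\ldots,v_n)$.

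First I would pick a nonzero element $P\in H_{n,r}(\F_q)$ in the kernel of $\epsilon_r$, which exists by hypothesis, so that $P(v_1,\ldots,v_n)=0$ in $E$. Given any $t\geq r$, I would then set $Q=x_1^{t-r}\in H_{n,t-r}(\F_q)$ (with the convention $Q=1$ when $t=r$) and consider $PQ\in H_{n,t}(\F_q)$. Homogeneity of degree $t$ is clear from adding degrees, and $PQ$ is nonzero because $\F_q[x_1,\ldots,x_n]$ is an integral domain while both factors are nonzero.

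Next I would evaluate: by definition of $\epsilon_t$ and the multiplicativity of evaluation,
\[
\epsilon_t(PQ) = P(v_1,\ldots,v_n)\,Q(v_1,\ldots,v_n) = 0\cdot v_1^{t-r}=0,
\]
so $PQ$ is a nonzero element of $\ker\epsilon_t$, which is the desired non-injectivity.

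There is essentially no obstacle here; the only thing worth a moment's care is that the ring of polynomials is used as an integral domain so that $PQ\neq 0$, and that one must separate the cases $t=r$ (where $Q=1$) and $t>r$ (where $Q=x_1^{t-r}$, or any other nonzero monomial of degree $t-r$).
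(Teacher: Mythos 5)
Your proof is correct and is essentially the paper's own argument: both multiply a nonzero $P\in\ker\epsilon_r$ by $x_1^{t-r}$ to produce a nonzero element of $\ker\epsilon_t$. You merely make explicit the (true and worth noting) point that $PQ\neq 0$ because the polynomial ring is an integral domain, which the paper leaves implicit.
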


\begin{proof}
Suppose that the homogeneous
polynomial $P$ of degree $r$ vanishes when evaluated on a given basis of the space of roots. Then if we set $Q=x_1^{t-r}P$, $Q$ is homogeneous
of degree $t$ and it also vanishes on the basis.
\end{proof}

%The basic idea of the lemma is that we can define an $\F_q$-linear evaluation mapping, $\epsilon$, say, from $A(\F_q)$ to $E$.
%If $\epsilon$ is not injective, let $I$ denote its kernel. $I$ is an ideal in $A(\F_q)$ and if we choose a nonzero polynomial
%of smallest degree $m$ in $I$

It seems reasonable when investigating the question posed above to restrict attention to the case that $L(x)/x$ is irreducible
in $F[x]$. Note then that this hypothesis ensures that $L$ has no repeated roots. We begin by examining what must be the easiest case, when $F=\F_q$. 

\subsection{Finite Fields}

\begin{lemma} \label{special_basis}
Let $L$ be a $q$-polynomial in $\F_q[x]$ of $q$-degree $n$, such that $L(x)/x$ is irreducible in $\F_q[x]$. Let
$\alpha\neq 0$ be a root of $L$ in a splitting field over $\F_q$. Then $\alpha$, $\alpha^q$, \dots, $\alpha^{q^{n-1}}$ are linearly
independent over $\F_q$ and are a basis for the space of roots of $L$.
\end{lemma}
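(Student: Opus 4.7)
The plan is to exploit the fact that the irreducibility of $M(x) = L(x)/x$ forces the minimal polynomial of $\alpha$ over $\F_q$ to have degree exactly $q^n-1$, and then show that any $\F_q$-linear dependence among the conjugates $\alpha, \alpha^q, \ldots, \alpha^{q^{n-1}}$ would produce a polynomial vanishing at $\alpha$ of strictly smaller degree.

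First I would observe that each $\alpha^{q^i}$ lies in the space of roots $V$ of $L$. This is immediate from the coefficients of $L$ being in $\F_q$: applying the Frobenius $x\mapsto x^q$ to the equation $L(\alpha)=0$ gives $L(\alpha^q)=0$, and iterating this yields $L(\alpha^{q^i})=0$ for every $i\ge 0$. So $\alpha,\alpha^q,\ldots,\alpha^{q^{n-1}}$ are $n$ elements of the $n$-dimensional $\F_q$-space $V$ (dimension by Lemma \ref{vector_space}), and it suffices to show they are linearly independent.

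Next, I would suppose for contradiction that there exist scalars $c_0,c_1,\ldots,c_{n-1}\in\F_q$, not all zero, with
\[
c_0\alpha+c_1\alpha^q+\cdots+c_{n-1}\alpha^{q^{n-1}}=0.
\]
Define the $q$-polynomial $K(x)=c_0x+c_1x^q+\cdots+c_{n-1}x^{q^{n-1}}\in\F_q[x]$, which has $q$-degree at most $n-1$. Then $K(\alpha)=0$, and since $\alpha\ne 0$, the polynomial $K(x)/x\in\F_q[x]$ also vanishes at $\alpha$. Its ordinary degree is at most $q^{n-1}-1$.

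The final step is to reach a contradiction via the minimal polynomial. Since $M(x)=L(x)/x$ is irreducible in $\F_q[x]$, and $M(\alpha)=0$ (because $\alpha$ is a nonzero root of $L$), $M(x)$ is the minimal polynomial of $\alpha$ over $\F_q$, of degree $q^n-1$. But $K(x)/x$ is a nonzero polynomial in $\F_q[x]$ of degree at most $q^{n-1}-1<q^n-1$ vanishing at $\alpha$, which is impossible. Hence no nontrivial dependence exists, and the $n$ elements are linearly independent, hence an $\F_q$-basis for $V$. The only potentially subtle point is confirming that $K(x)/x$ is genuinely nonzero, which follows because the $c_i$ are not all zero; beyond that, the argument is essentially a one-line minimal-polynomial comparison.
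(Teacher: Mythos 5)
Your proof is correct and follows essentially the same route as the paper: note that the $\alpha^{q^i}$ are roots because $L$ has coefficients in $\F_q$, then observe that a nontrivial $\F_q$-dependence would make $\alpha$ a root of a nonzero polynomial of degree at most $q^{n-1}$, contradicting that its minimal polynomial $L(x)/x$ has degree $q^n-1$. Your extra step of dividing $K(x)$ by $x$ is harmless but unnecessary, since $K$ itself already has degree below $q^n-1$.
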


\begin{proof}
We note that $\alpha^{q^i}$ is a root of $L$ for all $i$, because $L$ has coefficients in $\F_q$. 
Suppose that we have a linear dependence relation
\[
\lambda_0\alpha+\lambda_1\alpha^q+\cdots+ \lambda_{n-1}\alpha^{q^{n-1}},
\]
where the $\lambda_i$ are in $\F_q$. Then, unless all the $\lambda_i$ are zero, $\alpha$ is the root of a nonzero polynomial
of degree at most $q^{n-1}$ over $\F_q$, and this contradicts the assumption that the minimal polynomial of $\alpha$ has degree
$q^n-1$. It follows that the $\lambda_i$ are all zero and we have proved that the $n$ powers of $\alpha$ are a basis of the space
of roots.
\end{proof}

The use of this special basis of $V$ enables us to prove that $\epsilon_r$ is a monomorphism for many values
of $r$. 

\begin{thm} \label{monomorphism_if_less_than_q}
Let $L$ be a $q$-polynomial in $\F_q[x]$ of $q$-degree $n$, such that $L(x)/x$ is irreducible in $\F_q[x]$. 
Let $V$ be the space of roots of $L$ in a splitting field $E$ of $L$ over $\F_q$ 
$(E$ is isomorphic to $\F_{q^d}$, where $d=q^n-1)$. 
Then $\epsilon_r: H_{n,r}(\F_q) \longrightarrow E$ is injective for $1\leq r\leq q-1$.
\end{thm}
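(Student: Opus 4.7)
The natural plan is to evaluate $\epsilon_r$ on the special basis $\alpha, \alpha^q, \ldots, \alpha^{q^{n-1}}$ of $V$ supplied by Lemma \ref{special_basis}, and then exploit base-$q$ uniqueness together with the large degree of $\alpha$'s minimal polynomial. Since $\epsilon_r$ is independent of the choice of basis (Lemma \ref{independent_of_basis}), we are free to use this one.

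Concretely, I would write an arbitrary $P \in H_{n,r}(\F_q)$ as $P = \sum_{k_1+\cdots+k_n=r} c_{k_1,\ldots,k_n} x_1^{k_1}\cdots x_n^{k_n}$ with $c_{k_1,\ldots,k_n}\in\F_q$, and compute
\[
\epsilon_r(P) = \sum_{k_1+\cdots+k_n=r} c_{k_1,\ldots,k_n}\, \alpha^{k_1 + k_2 q + k_3 q^2 + \cdots + k_n q^{n-1}}.
\]
The assumption $1 \le r \le q-1$ forces each $k_i$ to satisfy $0 \le k_i \le q-1$, so each exponent $k_1 + k_2 q + \cdots + k_n q^{n-1}$ is the base-$q$ representation of an integer, and different tuples $(k_1,\ldots,k_n)$ therefore yield distinct exponents.

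Next I would bound those exponents: each is at most $r\cdot(1 + q + \cdots + q^{n-1})$, but more usefully, each is at most $(q-1)(1 + q + \cdots + q^{n-1}) = q^n - 1$, and since at least one $k_i$ is strictly less than $q-1$ when $r \le q - 1 < n(q-1)$ (excluding the trivial $n=1$ case, which is immediate), each exponent is in fact strictly less than $q^n - 1$. Since $L(x)/x$ is irreducible of degree $q^n - 1$, it is the minimal polynomial of $\alpha$ over $\F_q$, so $1, \alpha, \alpha^2, \ldots, \alpha^{q^n - 2}$ are $\F_q$-linearly independent in $E$. Hence the monomials appearing in $\epsilon_r(P)$ are linearly independent over $\F_q$, and $\epsilon_r(P) = 0$ forces every $c_{k_1,\ldots,k_n}$ to vanish.

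The only real subtlety — and the step I would check carefully — is verifying that the exponents stay safely within the range $[0, q^n - 2]$ where the powers of $\alpha$ are known to be linearly independent; everything else is direct from the uniqueness of base-$q$ expansions and from Lemma \ref{special_basis}. The restriction $r \le q-1$ is used precisely to guarantee both that each $k_i$ is a legitimate base-$q$ digit and that the largest exponent does not reach $q^n - 1$.
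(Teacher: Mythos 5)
Your proof is correct and follows essentially the same route as the paper's: evaluate on the special basis $\alpha, \alpha^q, \ldots, \alpha^{q^{n-1}}$ of Lemma \ref{special_basis}, note that the resulting exponents are base-$q$ integers, hence distinct for distinct monomials and bounded strictly below $q^n-1$, and conclude from the fact that $\alpha$ has degree $q^n-1$ over $\F_q$. If anything, you are slightly more explicit than the paper about why distinct monomials yield distinct powers of $\alpha$ (uniqueness of base-$q$ digits), which is a point the paper leaves implicit in this proof and only spells out later in Corollary \ref{direct_sum}.
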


\begin{proof}
Let $\alpha$ be a nonzero root of $L$ in $E$. We use the basis of $V$ described in Lemma \ref{special_basis} to study $\epsilon_r$.
We find that $\epsilon_r(V)$ consists of $\F_q$-linear combinations of powers $\alpha^t$, where we have
\[
t=r_0+r_1q+\cdots +r_{n-1}q^{n-1},
\]
and the $r_i$ are nonnegative integers that satisfy $r_0+r_1+\cdots +r_{n-1}=r$.

Now given the hypothesis that $r\leq q-1$, we have
\[
r_0+r_1q+\cdots +r_{n-1}q^{n-1}\leq (q-1)q^{n-1} <q^n-1.
\]
Thus the existence of any nontrivial dependence relation among the powers of $\alpha$ occurring in $\epsilon_r(V)$ implies
that $\alpha$ is a root of a nonzero polynomial in $\F_q[x]$ of degree less than $q^n-1$. This is a contradiction and we have
established the desired result.
\end{proof}

As we shall see later, it easy to show that $\epsilon_q$ is also injective but $\epsilon_{q+1}$ is not. The interest of restricting
to values of $r$ at most $q-1$ is suggested by our next result, where 
we give a more precise description of the way in which the subspaces $\epsilon_r(V)$ are embedded in $E$.

\begin{cor} \label{direct_sum}
Using the notation and hypotheses of Theorem \ref{monomorphism_if_less_than_q}, and taking 
$\epsilon_0(V)$ to be the one-dimensional
subspace spanned by $1$,
the splitting field $E$ contains the direct sum
\[
\epsilon_0(V)\oplus\epsilon_1(V)\oplus\cdots \oplus \epsilon_{q-1}(V)
\]
of dimension
\[
\frac{q(q+1)\cdots (q+n-1)}{n!}.
\]
\end{cor}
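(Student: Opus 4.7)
The plan is to exploit the special basis of $V$ from Lemma \ref{special_basis} to control exactly which powers of $\alpha$ appear in each $\epsilon_r(V)$, and then to show that across different $r$ in the range $0\le r \le q-1$ these exponents are disjoint and all bounded by $q^n-2$. Directness of the sum will then follow from the irreducibility of $L(x)/x$, and the dimension count is a standard binomial identity.

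First I would pick a nonzero root $\alpha$ of $L$ and take the basis $\alpha, \alpha^q, \ldots, \alpha^{q^{n-1}}$ of $V$. With this choice, any element of $\epsilon_r(V)$ is an $\F_q$-linear combination of monomials $\alpha^t$ where $t = r_0 + r_1 q + \cdots + r_{n-1}q^{n-1}$ with nonnegative integers $r_i$ satisfying $r_0+\cdots+r_{n-1}=r$. The key point is that under the hypothesis $r\le q-1$, each digit $r_i$ lies in $\{0,1,\ldots,q-1\}$, so the expression $t = \sum r_i q^i$ is the actual base-$q$ expansion of $t$. Hence the digit sum uniquely recovers $r$, and for distinct values $r\ne r'$ with $0\le r,r'\le q-1$ the sets of exponents $t$ occurring in $\epsilon_r(V)$ and $\epsilon_{r'}(V)$ are disjoint.

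Next I would note that the largest exponent possible is $(q-1)q^{n-1} < q^n - 1$. Since $L(x)/x$ is irreducible of degree $q^n-1$ and $\alpha$ is a root of it, the minimal polynomial of $\alpha$ over $\F_q$ has degree $q^n-1$, so the powers $1, \alpha, \alpha^2, \ldots, \alpha^{q^n-2}$ are $\F_q$-linearly independent in $E$. Consequently any $\F_q$-linear relation across the spaces $\epsilon_0(V), \epsilon_1(V), \ldots, \epsilon_{q-1}(V)$ would, on expanding each summand in the basis of powers of $\alpha$, yield a nontrivial dependence among distinct powers $\alpha^t$ with $t<q^n-1$, contradiction. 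This gives the direct sum decomposition inside $E$.

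For the dimension, Theorem \ref{monomorphism_if_less_than_q} (together with the trivial case $r=0$) tells us $\epsilon_r$ is injective, so $\dim_{\F_q}\epsilon_r(V) = \dim_{\F_q} H_{n,r}(\F_q) = \binom{n+r-1}{r}$ for $0\le r\le q-1$. Summing by the hockey stick identity,
\[
\sum_{r=0}^{q-1}\binom{n+r-1}{r} \;=\; \sum_{r=0}^{q-1}\binom{n+r-1}{n-1} \;=\; \binom{n+q-1}{n} \;=\; \frac{q(q+1)\cdots(q+n-1)}{n!},
\]
which is the claimed dimension. The only real subtlety is the digit-sum bookkeeping, and it is precisely the inequality $r\le q-1$ that prevents carrying in base $q$ and allows the argument to work; this is also why one should expect the statement to fail when $r$ reaches $q$, foreshadowing Theorem \ref{q+1_not_injective}.
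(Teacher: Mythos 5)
Your proposal is correct and follows essentially the same route as the paper's own proof: the special basis $\alpha,\alpha^q,\ldots,\alpha^{q^{n-1}}$, the observation that for $r\le q-1$ the exponents occurring in $\epsilon_r(V)$ are base-$q$ integers of digit sum (weight) exactly $r$ and hence lie in disjoint sets bounded by $(q-1)q^{n-1}<q^n-1$, the linear independence of $1,\alpha,\ldots,\alpha^{q^n-2}$, and the binomial-sum dimension count. You are slightly more explicit than the paper about why no carrying occurs and about invoking Theorem \ref{monomorphism_if_less_than_q} for the dimensions, but these are the same details the paper leaves implicit.
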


\begin{proof}
We have noted that $\epsilon_r(V)$ is spanned by powers of $\alpha$ where the exponents of $\alpha$ have the form
\[
t=r_0+r_1q+\cdots +r_{n-1}q^{n-1},
\]
and the $r_i$ are nonnegative integers that satisfy $r_0+r_1+\cdots +r_{n-1}=r$. The integer $t$ is expressed as a $q$-adic integer, which we will say has weight $r$. The expression is unique: its representation as a $q$-adic integer has exactly one weight. The number of such integers of weight $r$ is $\dim H_{n,r}(\F_q)$. 

If the sum of the subspaces $\epsilon_i(V)$ is not direct, we must have a dependence of the form
\[
\lambda_0v_0+\lambda_1v_1+ \cdots +\lambda_{q-1}v_{q-1},
\]
where the $\lambda_i$ are in $\F_q$ and the $v_i$ in $\epsilon_i(V)$. Each $v_i$ is an $\F_q$-combination of powers of $\alpha$, where the exponents have weight $i$. Thus the dependence involves $\dim H_{n,0}+\dim H_{n,1}+\cdots+ \dim H_{n,q-1}$ different powers of $\alpha$.
Since the powers $\alpha^i$, $0\leq i<q^n-1$, are linearly independent, this is clearly impossible and we deduce that the sum is direct.

Finally, the dimension of the direct sum is
\[
\sum_{i=0}^{q-1}\binom{n+i-1}{i}=\frac{q(q+1)\cdots (q+n-1)}{n!}.
\]
\end{proof}

As we remarked earlier, the following result holds in the finite field case.

\begin{thm} \label{q+1_not_injective}
Assume the hypotheses of Theorem \ref{monomorphism_if_less_than_q} and suppose that $n\geq 2$. Then the mapping
$\epsilon_{r}$ is not injective for $r\geq q+1$.
\end{thm}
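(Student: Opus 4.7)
The plan is to invoke Lemma \ref{generically_noninjective} to reduce the claim to showing $\epsilon_{q+1}$ has nontrivial kernel, and then to exhibit an explicit element of $\ker\epsilon_{q+1}$ built from the minimal polynomial relation satisfied by a nonzero root $\alpha$ of $L$. I would work in the special basis $v_i=\alpha^{q^{i-1}}$ of Lemma \ref{special_basis}, under which
\[
\epsilon_{q+1}(x_1^{r_1}\cdots x_n^{r_n})=\alpha^{r_1+r_2q+\cdots+r_nq^{n-1}}
\]
whenever $r_1+\cdots+r_n=q+1$.

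Writing $L(x)=\sum_{i=0}^n a_ix^{q^i}$, the relation $L(\alpha)=0$ gives $\alpha^{q^n}=-a_n^{-1}\sum_{i=0}^{n-1}a_i\alpha^{q^i}$. Multiplying through by $\alpha^{q^{n-1}}$ yields
\[
\alpha^{q^n+q^{n-1}}=-a_n^{-1}\sum_{i=0}^{n-1}a_i\,\alpha^{q^{n-1}+q^i},
\]
whose left-hand side is exactly $\epsilon_{q+1}(x_n^{q+1})$. The heart of the argument is to recognize each term $\alpha^{q^{n-1}+q^i}$ on the right as the image of some weight-$(q+1)$ monomial. For this I would use the identity $q\cdot q^{j-1}=q^j$, which allows one to trade a $1$ at position $q^j$ for a $q$ at position $q^{j-1}$, raising the digit weight by $q-1$. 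Concretely, for $1\le i\le n-1$ the monomial $x_i^qx_n$ has weight $q+1$ and image $\alpha^{q^i+q^{n-1}}$; for the boundary value $i=0$ one cannot carry below position $q^0$, so one carries the other summand instead and takes the monomial $x_1x_{n-1}^q$, whose image is $\alpha^{1+q^{n-1}}$. This last step requires $n\ge 2$ so that $x_{n-1}$ is available. Substituting back produces
\[
P=x_n^{q+1}+a_n^{-1}a_0\,x_1x_{n-1}^q+a_n^{-1}\sum_{i=1}^{n-1}a_i\,x_i^qx_n\in\ker\epsilon_{q+1}.
\]

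To finish, I would check that $P\ne 0$: the monomial $x_n^{q+1}$ appears with coefficient $1$, and none of the other monomials in $P$ involves $x_n$ to a power greater than $1$, so all are distinct from $x_n^{q+1}$. Therefore $\epsilon_{q+1}$ has nonzero kernel, and Lemma \ref{generically_noninjective} extends the conclusion to every $r\ge q+1$. The main subtlety I expect is the rewriting step, especially the $i=0$ boundary case where the natural carry-down identity fails and one must carry the other term; this is precisely the point at which the hypothesis $n\ge 2$ becomes essential. Once that is handled, the nonvanishing of $P$ is automatic from $a_0\ne 0$ and the obvious distinctness of the monomials involved.
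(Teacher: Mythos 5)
Your proof is correct, and it follows the same overall strategy as the paper: reduce to $r=q+1$ via Lemma \ref{generically_noninjective}, work in the special basis $v_i=\alpha^{q^{i-1}}$ of Lemma \ref{special_basis}, and exhibit an explicit nonzero element of $\ker\epsilon_{q+1}$. The difference is in the kernel element itself. The paper splits into cases: for $n\geq 3$ it uses the coefficient-free binomial $x_2^{q+1}-x_1^qx_3$, which vanishes for the purely combinatorial reason that $q\cdot q + q = q + q^2$ gives two distinct weight-$(q+1)$ monomials evaluating to the same power of $\alpha$ (no use of $L$ at all); only for $n=2$, where no such ``collision'' of $q$-adic representations is available, does it multiply the relation $L(\alpha)=0$ by $\alpha^{q^{n-1}}$ as you do. Your construction is the uniform generalization of that $n=2$ argument to all $n\geq 2$: your polynomial $P=x_n^{q+1}+a_n^{-1}a_0\,x_1x_{n-1}^q+a_n^{-1}\sum_{i=1}^{n-1}a_i\,x_i^qx_n$ specializes (up to the monic normalization) to the paper's $P(x_1,x_2)=x_2^{q+1}+ax_1^qx_2+bx_1^{q+1}$ when $n=2$, and your handling of the boundary term $\alpha^{1+q^{n-1}}$ via $x_1x_{n-1}^q$ is exactly the carry that makes it work. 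What you buy is a single case-free argument; what the paper's version buys for $n\geq 3$ is a simpler witness that is independent of the coefficients of $L$ and makes transparent that non-injectivity at $r=q+1$ is forced by the arithmetic of $q$-adic weights alone. Your verification that $P\neq 0$ (coefficient $1$ on $x_n^{q+1}$, every other monomial having $x_n$-degree at most $1<q+1$) is sound, and in fact you do not even need $a_0\neq 0$ for that step.
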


\begin{proof}
We use the basis $v_1=\alpha$, $v_2=\alpha^q$, \dots, $v_n=\alpha^{q^{n-1}}$ of the space of roots of $L$ and assume first 
that $n\geq 3$. Lemma
\ref{generically_noninjective} shows that it suffices to prove the result when $r=q+1$.
Consider the homogeneous polynomial 
\[
P(x_1, \ldots, x_n)=x_2^{q+1}-x_1^qx_3.
\]
of degree $q+1$.
It is clear that when $P$ is evaluated on the given basis, it vanishes. 

Now suppose that $n=2$. In this case, we can assume that $L=x^{q^2}+ax^q+b$ for suitable $a$ and $b$ in $\F_q$. Consider the polynomial
\[
P(x_1,x_2)=x_2^{q+1}+ax_1^qx_2+bx_1^{q+1}
\]
in $\F_q[x]$. We have
\[
P(\alpha, \alpha^q)=\alpha^{q(q+1)}+a\alpha^{2q}+b\alpha^{q+1}=\alpha^q(\alpha^{q^2}+a\alpha^{q}+b\alpha)=0.
\]
Thus $P$ vanishes on the basis and we deduce that $\epsilon_{q+1}$ is not injective in this case also.
\end{proof}

We shall describe next one circumstance where we cannot
expect to find any analogue of Theorem \ref{monomorphism_if_less_than_q}. This relates to the subject of
$q^s$-polynomials. 

\begin{thm} \label{q^s-polynomial}
Suppose that $s>1$ is an integer and the field $\mathbb{F}_{q^s}$ is not contained in $F$. Let $L$ be a $q^s$-polynomial in $F[x]$ with no repeated roots in its splitting field $E$ over $F$. Then the mapping $\epsilon_s$ into $E$ is not injective.
\end{thm}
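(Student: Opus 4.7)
The plan is to construct an explicit nonzero element of $\ker \epsilon_s$. By Lemma~\ref{semi_linear_automorphism} (which applies because the no-repeated-roots hypothesis forces the coefficient of $x$ in $L$ to be nonzero), we have $\F_{q^s} \subseteq E$ and $V$ is an $m$-dimensional vector space over $\F_{q^s}$, where $m$ is the $q^s$-degree of $L$ and hence $n = ms$. This $\F_{q^s}$-module structure on $V$, which is not visible from the coefficient field $F$ alone, is the extra ingredient that will produce a syzygy in degree $s$.

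First I would fix a primitive element $\gamma$ of $\F_{q^s}$ over $\F_q$ with minimal polynomial yielding $\gamma^s = c_{s-1}\gamma^{s-1} + \cdots + c_0$ with $c_i \in \F_q$, pick an $\F_{q^s}$-basis $w_1, \ldots, w_m$ of $V$, and use the $\F_q$-basis $v_{j,k} := \gamma^j w_k$ (for $0 \leq j \leq s-1$, $1 \leq k \leq m$) to set up $\epsilon_s$. By Lemma~\ref{independent_of_basis}, any $\F_q$-basis of $V$ will do for deciding injectivity. The central calculation is then
\[
v_{1,1}^s = (\gamma w_1)^s = \gamma^s w_1^s = \sum_{i=0}^{s-1} c_i\, (\gamma^i w_1)\, w_1^{s-1} = \sum_{i=0}^{s-1} c_i\, v_{i,1}\, v_{0,1}^{s-1},
\]
which shows that
\[
Q := x_{1,1}^s - \sum_{i=0}^{s-1} c_i\, x_{i,1}\, x_{0,1}^{s-1}\ \in\ H_{n,s}(\F_q)
\]
lies in $\ker \epsilon_s$. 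Because the monomial $x_{1,1}^s$ appears in $Q$ with coefficient $1$ and in no other summand, $Q$ is nonzero, finishing the argument.

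No real obstacle is expected: once Lemma~\ref{semi_linear_automorphism} supplies the $\F_{q^s}$-action on $V$, the identity above is forced by expanding $\gamma^s$ through its minimal polynomial over $\F_q$. The hypothesis $\F_{q^s} \not\subseteq F$ is the standing assumption that makes $L$ a genuine $q^s$-polynomial and makes the conclusion $\F_{q^s}\subseteq E$ of Lemma~\ref{semi_linear_automorphism} non-trivial, but the construction above does not explicitly require it.
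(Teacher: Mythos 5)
Your proof is correct and is essentially the paper's argument: both exploit the $\mathbb{F}_{q^s}$-scalar action on a single root together with the $\F_q$-linear dependence of $1,\gamma,\dots,\gamma^s$ to write down an explicit nonzero homogeneous polynomial of degree $s$ in the kernel of $\epsilon_s$. The paper packages the same relation slightly more economically using only the two independent roots $\alpha$ and $\omega\alpha$ (yielding a binary form $\sum_i\lambda_i x_1^{s-i}x_2^i$ killed by $\epsilon_s$), whereas you spread it over the basis elements $\gamma^i w_1$, but the mechanism is identical.
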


\begin{proof}
Let $\alpha$ be a nonzero root of $L$ in $E$ and let $\omega$ be an element of $\mathbb{F}_{q^s}$ not in $\F_q$ (recall that
$\mathbb{F}_{q^s}$ is contained in $E$ by Lemma  \ref{semi_linear_automorphism}). 
Let $\beta=\omega\alpha$. Then $\alpha$ and $\beta$ are roots of $L$ that are linearly
independent over $\F_q$. 

The powers $1, \alpha, \alpha^2, \ldots, \alpha^s$ are linearly dependent over $\F_q$, since $\mathbb{F}_{q^s}$ has dimension $s$
over $\F_q$. It follows that $\alpha^s$, $\alpha^{s-1}\beta$, \dots, $\beta^s$ are linearly dependent over $\F_q$, say
\[
\lambda_0 \alpha^s+\lambda_1\alpha^{s-1}\beta+\cdots +\lambda_s\beta^s=0,
\]
where the $\lambda_i\in\F_q$ and are not all zero.

Consider the homogeneous polynomial $P$ of degree $s$ in the variables $x_1$, \dots, $x_n$ (where $n$ is the $q$-degree of $L$) given
by
\[
P(x_1, \ldots, x_n)=\lambda_0 x_1^s+\lambda_1 x_1^{s-1}x_2+\cdots +\lambda_s x_2^s,
\]
all other terms being 0. We extend the linear independent roots $\alpha$ and $\beta$ to a basis $v_1=\alpha$, $v_2=\beta$, \dots, $v_n$
of the space of roots of $L$. Then we find that
\[
\epsilon_sP(x_1, \ldots, x_n)=P(v_1, \ldots, v_n)=0.
\]
Since $P$ is a nonzero polynomial in $H_{n,s}(\F_q)$ that is in the kernel of $\epsilon_s$, we see that $\epsilon_s$ is not injective.
\end{proof}

It may be observed that in the previous theorem, 
the space  of roots of $L$ is more naturally a vector space over $\mathbb{F}_{q^s}$ than one over
$\F_q$, and we might expect it to be easier to obtain dependencies when we work over $\F_q$, rather than $\mathbb{F}_{q^s}$. Nonetheless, we feel that the theorem serves to indicate that caution is necessary when we try to generalize Theorem \ref{monomorphism_if_less_than_q} to other situations.

In this connection the following theorem related to Theorem \ref{q^s-polynomial} is of interest, although the proof we use is unrelated
to the ideas developed so far.

\begin{thm} \label{reducible_if_q^s_polynomial}
Let $L$ be a $q$-polynomial in $\F_q[x]$ such that $L(x)/x$ is irreducible over $\F_q$. Then $L$ is not a $q^s$-polynomial for
any $s>1$.
\end{thm}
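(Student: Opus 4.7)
The plan is to encode $L(x)=\sum_{i=0}^n a_i x^{q^i}$ by the ordinary polynomial $\ell(T)=\sum_{i=0}^n a_i T^i \in \F_q[T]$ of degree $n$. Being a $q^s$-polynomial translates to the algebraic statement $\ell(T)=\tilde\ell(T^s)$ for some $\tilde\ell\in\F_q[T]$ of degree $m=n/s$. I would prove the theorem in two stages: first show that the irreducibility hypothesis on $L(x)/x$ forces $\ell$ to be a \emph{primitive} polynomial of degree $n$ over $\F_q$, then derive a contradiction from $\ell(T)=\tilde\ell(T^s)$ for $s>1$ via a short order-of-element calculation in $\F_{q^n}^*$.

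For the primitivity claim, I would equip the space of roots $V$ with its $\F_q[F]$-module structure, where $F$ acts as the Frobenius $\phi:x\mapsto x^q$. Lemma \ref{special_basis} supplies the basis $\alpha,\alpha^q,\ldots,\alpha^{q^{n-1}}$ of $V$, so $V$ is cyclic as an $\F_q[F]$-module; since $\ell(F)$ annihilates $V$ (as $\ell(F)v=L(v)=0$) and has degree $n=\dim_{\F_q} V$, it is the minimal polynomial of $F$ on $V$. Now the irreducibility of $L(x)/x$ makes every nonzero $v\in V$ have Frobenius orbit of size $q^n-1$, which forbids any proper $F$-invariant subspace of $V$; this in turn rules out both $\ell=\ell_1\ell_2$ with coprime factors (which would split $V$ by CRT) and $\ell=\ell_1^e$ with $e\geq 2$ (which would yield the proper invariant $\ker\ell_1(F)$). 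Hence $\ell$ is irreducible, so $V\cong\F_q[F]/(\ell)\cong\F_{q^n}$ with $F$ corresponding to a root $\theta$ of $\ell$ of order $q^n-1$; that is, $\ell$ is primitive.

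For the contradiction, suppose $\ell(T)=\tilde\ell(T^s)$ with $s>1$. Any factorization of $\tilde\ell$ would factor $\ell$, so $\tilde\ell$ is irreducible of degree $m$. Thus $\theta^s$ is a root of $\tilde\ell$ and lies in $\F_{q^m}$, giving $\operatorname{ord}(\theta^s)\mid q^m-1$. But primitivity forces
\[
\operatorname{ord}(\theta^s)=\frac{q^n-1}{\gcd(q^n-1,s)}\geq\frac{q^n-1}{s},
\]
so we would need $q^n-1\leq s(q^m-1)$. This fails because
\[
\frac{q^n-1}{q^m-1}=1+q^m+q^{2m}+\cdots+q^{(s-1)m}>s
\]
whenever $s\geq 2$ and $q^m\geq 2$: the sum has $s$ terms, of which at least one is $\geq q^m\geq 2$, so it strictly exceeds $s$. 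This is the required contradiction.

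The principal difficulty is the first step---translating the irreducibility of $L(x)/x$, a polynomial of the very high degree $q^n-1$, into the primitivity of the much lower-degree polynomial $\ell$. Once primitivity is secured, the arithmetic obstruction in $\F_{q^n}^*$ produced by a decomposition $\ell(T)=\tilde\ell(T^s)$ is essentially immediate.
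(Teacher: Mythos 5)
Your argument is correct and follows essentially the same route as the paper: pass to the conventional $q$-associate $\ell$, show that its roots are primitive elements of $\F_{q^n}$, and derive the contradiction $q^{ms}-1\leq s(q^m-1)$ from the fact that a root of $\tilde\ell$ lies in $\F_{q^m}$. The only difference is that the paper obtains the primitivity of $\ell$ by citing Theorem 3.63 of Lidl--Niederreiter, whereas you prove it from scratch via the $\F_q[F]$-module structure of the space of roots; that self-contained derivation is sound.
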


\begin{proof}
Let the $q$-degree of $L$ be $n$ and let
\[
L=a_nx^{q^n}+a_{n-1}x^{q^{n-1}}+\cdots +a_0x,
\]
where $a_n\neq 0$. The polynomial $\ell(x)$ in $\F_q[x]$ defined by
\[
\ell(x)=a_nx^n+a_{n-1}x^{n-1}+\cdots +a_0
\]
is called the conventional $q$-associate of $L$.

Given that $L(x)/x$ is irreducible, Theorem 3.63 of \cite{LN} 
implies that $\ell(x)$ is irreducible and any root $\alpha$ of $\ell(x)$ in
$\mathbb{F}_{q^n}$ has multiplicative order $q^n-1$. Suppose now that $L$ is a $q^s$-polynomial, where $s>1$. Then $n=ms$ for some integer
$m$ and $\ell$ must be a polynomial in $x^s$, say $\ell(x)=g(x^s)$ where $g$ has degree $m$. Now we have $\ell(\alpha)=0=g(\alpha^s)$.
This shows that $\alpha^s$ is a root of $g$. 

Since $\ell$ is irreducible of degree $m$, its roots have order dividing $q^m-1$. Thus $\alpha^{s(q^m-1)}=1$. On the other hand,
Theorem 3.63 of \cite{LN} 
shows that $\alpha$ has order $q^{ms}-1$. Thus $q^{ms}-1$ divides $s(q^m-1)$. This is clearly impossible if $s>1$.
Thus $L$ is not a $q^s$-polynomial.
\end{proof}

This theorem  only applies to finite fields.

\subsection{Arbitrary Fields, Dimension 2}

We now move to the case where the field of coefficients $F$ is an arbitrary 
field of characteristic $p$.
We wish to examine the case of two linearly independent roots 
$\alpha$ and $\beta$ of a $q$-polynomial.
We present two theorems, one concerning linear dependence of
$\alpha^d$, $\alpha^{d-1}\beta$, \dots, $\beta^d$  over $\F_q$,
and the other concerning linear dependence over $F$.
These results will be applied in the next section to 
give us more special cases in our investigations of the injectivity of
the evaluation maps.

\begin{thm} \label{estimate_of_m}
 Let $L$ be a monic $q$-polynomial of $q$-degree $n$ in $F[x]$ with the following properties:
 
 \begin{enumerate}
 \item $L(x)/x$ is irreducible over $F$.
 \item $L$ is not a $q^s$-polynomial for any integer $s>1$.
 \item $L$ has the form
 \[
 x^{q^n}+a_{n-k}x^{q^{n-k}}+\cdots +a_0x,
 \]
 where $a_{n-k}\neq 0$, i.e. either $k=1$ and $a_{n-1}\not=0$, or  
 $k>1$ and $a_{n-1}=\cdots =a_{n-k+1}=0$.
 \item $V$ is the space of roots of $L$ in a splitting field $E$ of $L$ over $F$.
 \end{enumerate}
 
 Let $\alpha$ and $\beta$ be elements of $V$ linearly independent over $\F_q$. Suppose that there exists a positive integer $d$
 such that $\alpha^d$, $\alpha^{d-1}\beta$, \dots, $\beta^d$ are linearly dependent over $\F_q$
 (such $d$ may not exist). 
 Then $d\geq q^k+1$.
 \end{thm}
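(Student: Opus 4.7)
The plan is to pass to the ratio $\tau = \beta/\alpha \in E$ and combine two competing degree bounds. The $\F_q$-dependence will give $[F(\tau):F] \leq d$, while a relation derived by applying $L$ to both $\alpha$ and $\tau\alpha = \beta$ will give $[F(\alpha,\tau):F(\tau)] \leq q^{n-k}-1$. The tower law together with $[F(\alpha):F] = q^n - 1$ (from the irreducibility of $L/x$) will then force $d \geq q^k + 1$. To begin, I note that $\alpha,\beta$ are both nonzero and $\tau \notin \F_q$ (otherwise $\beta = \tau\alpha$ would be an $\F_q$-multiple of $\alpha$); dividing the hypothesized relation by $\alpha^d$ produces $\sum_{i=0}^d \lambda_i \tau^i = 0$, a nonzero polynomial of degree at most $d$ in $\F_q[y]$ that vanishes at $\tau$. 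This gives $[\F_q(\tau):\F_q] \leq d$ and hence $[F(\tau):F] \leq d$.

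For the second bound I will substitute $\beta = \tau\alpha$ into $L(\beta)=0$ and use $L(\alpha)=0$ to eliminate $\alpha^{q^n}$. Since each $x \mapsto x^{q^j}$ is multiplicative and the coefficients of $x^{q^{n-1}},\ldots,x^{q^{n-k+1}}$ in $L$ vanish, this yields
\[
\sum_{j=0}^{n-k} a_j\bigl(\tau^{q^j}-\tau^{q^n}\bigr)\alpha^{q^j} = 0,
\]
so after dividing by $\alpha$, the element $\alpha$ is a root of
\[
R(x) = \sum_{j=0}^{n-k} a_j\bigl(\tau^{q^j}-\tau^{q^n}\bigr)x^{q^j-1} \in F(\tau)[x],
\]
which has degree at most $q^{n-k}-1$. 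Provided $R$ is nonzero, this forces $[F(\alpha,\tau):F(\tau)] \leq q^{n-k}-1$.

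The main obstacle is verifying that $R$ is not identically zero, and this is the one place hypothesis~(2) will enter. If $R\equiv 0$, then for every $j$ with $a_j\neq 0$ we have $\tau^{q^j}=\tau^{q^n}$, so $\tau \in \F_{q^{n-j}}$. Irreducibility of $L/x$ forces $a_0 \neq 0$ (otherwise $x$ would divide $L/x$), and by hypothesis $a_{n-k}\neq 0$, so the intersection gives $\tau \in \F_{q^g}$ with $g = \gcd\{\,n-j : a_j \neq 0\,\}$. Since $0$ is among these indices, $g \mid n$, and then $g \mid (n-j)$ for each $j$ with $a_j\neq 0$ gives $g \mid j$ for each such $j$; thus $g$ divides every exponent of $L$, making $L$ a $q^g$-polynomial, so hypothesis~(2) forces $g = 1$ and $\tau \in \F_q$, contradicting the $\F_q$-independence of $\alpha,\beta$. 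Combining the two bounds via the tower law gives
\[
q^n - 1 = [F(\alpha):F] \leq [F(\alpha,\tau):F(\tau)]\cdot[F(\tau):F] \leq (q^{n-k}-1)\,d,
\]
and the identity $q^n - 1 = q^k(q^{n-k}-1) + (q^k - 1)$ with $k\geq 1$ shows $(q^n-1)/(q^{n-k}-1) > q^k$, so the integer $d$ must satisfy $d \geq q^k + 1$.
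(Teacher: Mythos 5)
Your proof is correct and follows essentially the same route as the paper's: pass to the ratio of the two roots (you use $\tau=\beta/\alpha$ where the paper uses $\gamma=\alpha/\beta$), bound its degree over $F$ by $d$, eliminate the top term from $L(\alpha)=L(\beta)=0$ to get a nonzero $q$-polynomial relation of $q$-degree at most $n-k$ for one root over $F(\tau)$ (with hypothesis (2) ruling out the identically-zero case, which you handle via the gcd of the exponents where the paper uses a divisibility chain), and finish with the tower law against $[F(\alpha):F]=q^n-1$. The minor streamlining --- avoiding the minimality of $d$ and the irreducibility of the minimal polynomial of the ratio --- is harmless.
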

 
 \begin{proof}
 Let $m$ be the smallest such $d$ and let $\lambda_0$, \dots, $\lambda_m$ be elements of $\F_q$, not all 0, with
 \[
 \lambda_0\alpha^m+\lambda_1 \alpha^{m-1}\beta+\cdots +\lambda_m\beta^m=0.
 \]
 We claim that $\lambda_0\neq 0$. For if $\lambda_0=0$, we may divide the resulting equality by $\beta$ to obtain
 \[
 \lambda_1\alpha^{m-1}+\cdots +\lambda_m\beta^{m-1}=0,
 \]
 and this contradicts the minimality of $m$. Thus $\lambda_0\neq 0$ and likewise $\lambda_m\neq 0$. Note also that $m>1$, since
 we are assuming that $\alpha$ and $\beta$ are linearly independent over $\F_q$.
 
 We now see that if we set $\gamma=\alpha/\beta$, $\gamma$ is a root of a polynomial of degree $m$ over $\F_q$. The minimality of
 $m$ again shows that this polynomial is irreducible over $\F_q$. We deduce that $\gamma\in \mathbb{F}_{q^m}$ and hence
 $\gamma^{q^m}=\gamma$.
 
 Since $\alpha=\gamma \beta $ and $\beta$ are roots of $L$, we have
 \[
 \beta^{q^n}+a_{n-k}\beta^{q^{n-k}}+\cdots +a_0 \beta=0
 \]
 and likewise
 \[
 \gamma^{q^n}\beta^{q^n}+\gamma^{q^{n-k}}a_{n-k}\beta^{q^{n-k}}+\cdots +\gamma a_0 \beta=0.
 \]
 We divide by $\gamma^{q^n}$ and subtract the resulting two monic equations involving powers of $\beta$ to obtain
 \[
 a_{n-k}(\gamma^{(q^{n-k}-q^n)}-1)\beta^{q^{n-k}}+\cdots +a_0(\gamma^{1-q^n}-1)\beta=0.
 \]
 We see that $\beta$ is a root of a $q$-polynomial of $q$-degree at most $n-k$ over the field $F(\gamma)$. 
 
 Suppose if possible that this polynomial is zero. It follows that 
 \[
 a_{n-i}(\gamma^{(q^{n-i}-q^n)}-1)=0
 \]
 for $k\leq i\leq n$. It is certainly true that $a_0\neq 0$ under our hypothesis that $L(x)/x$ is irreducible. 
 Then the $i=n$ case yields  $\gamma^{q^n}=\gamma$. 
 Thus since we know that $\gamma\in\mathbb{F}_{q^m}$ and no smaller field $\mathbb{F}_{q^t}$, we deduce
 that $m$ divides $n$.
  
  Consider now an equation
   \[
 a_{n-i}(\gamma^{(q^{n-i}-q^n)}-1)=0
 \]
 and suppose that $a_{n-i}\neq 0$. Then we have
 \[
 \gamma^{q^n}=\gamma^{q^{n-i}}.
 \]
 The argument just applied above shows that $m$ divides $n-i$. This implies that $L$ is a $q^m$-polynomial, where $m>1$, contrary
 to hypothesis. We deduce that $\beta$ is indeed a root of a nonzero $q$-polynomial of $q$-degree at most $n-k$ over $F(\gamma)$.
 
 We obtain the inequality
 \[
 [F(\gamma,\beta):F(\gamma)]\leq q^{n-k}-1.
 \]
 Since $\F_q$ is assumed to be a subfield of $F$, and since the minimal polynomial of $\gamma$ over $\F_q$ has degree $m$, 
 we have
 \[
 [F(\gamma):F]\leq m.
 \]
 Thus we obtain
 \[
 [F(\gamma,\beta):F]\leq m(q^{n-k}-1).
 \]
 
 Since $F(\beta)$ is a subfield of $F(\gamma, \beta)$, we obtain the inequality
 \[
  [F(\gamma,\beta):F]=[F(\gamma,\beta):F(\beta)][F(\beta):F]=[F(\gamma,\beta):F(\beta)](q^n-1).\
  \]
  Hence the inequality
  \[
  [F(\gamma,\beta):F(\beta)](q^n-1)\leq m(q^{n-k}-1)
  \]
  holds. We deduce that
  \[
  m\geq (q^n-1)/(q^{n-k}-1).
  \]
  Since $m$ is an integer, we obtain that $m\geq q^k+1$. Thus the $d$ in the statement of the theorem is at least $q^k+1$.
 \end{proof}

So far we studied the evaluation mapping $\epsilon_r$ from the space $H_{n,r}(\F_q)$
of homogeneous polynomials of degree $r$ into the splitting field $E$ of some $q$-polynomial $L$ of $q$-degree $n$ in $F[x]$.
It seems reasonable to use the field $\F_q$ for coefficients
because the space of roots of $L$ is a vector space over $\F_q$.
However, we can also study evaluations when we replace $H_{n,r}(\F_q)$ by $H_{n,r}(F)$ but still evaluate on a basis of $V$. We shall
let $\eta_r$ denote the evaluation mapping from $H_{n,r}(F) \longrightarrow E$. 
The image of $\eta_r$ is an $F$-subspace of $E$. Since the dimension
of $H_{n,r}(F)$ over $F$ increases monotonically as $r$ increases, whereas $E$ has finite dimension over $F$, $\eta_r$ is not injective
for almost all $r$. This suggests the problem of finding or estimating the smallest $r$ for 
which $\eta_r$ is not injective.

 The next Theorem is similar to Theorem \ref{estimate_of_m}, but is not the same,
 because the next Theorem assumes
 that $\alpha^d$, $\alpha^{d-1}\beta$, \dots, $\beta^d$ are linearly dependent over $F$,
 whereas the previous Theorem assumed linear dependence over $\F_q$.

\begin{thm} \label{estimate_of_m_when_using_F}
 Let $L$ be a monic $q$-polynomial of $q$-degree $n$ in $F[x]$ with the following properties:
 
 \begin{enumerate}
 \item $L(x)/x$ is irreducible over $F$.
 \item $L$ is not a $q^s$-polynomial for any integer $s>1$.
 \item $L$ has the form
 \[
 x^{q^n}+a_{n-k}x^{q^{n-k}}+\cdots +a_0x,
 \]
 where $a_{n-k}\neq 0$ (thus $a_{n-1}=\cdots a_{n-k+1}=0$ if $k>1$).
 \item $V$ is the space of roots of $L$ in a splitting field $E$ of $L$ over $F$.
 \end{enumerate}
 
 Let $\alpha$ and $\beta$ be elements of $V$ linearly independent over $\F_q$. Let $d$ be a positive integer 
 such that $\alpha^d$, $\alpha^{d-1}\beta$, \dots, $\beta^d$ are linearly dependent over $F$
 (such  $d$ will certainly exist).
 Then $d\geq q^k+1$.
 \end{thm}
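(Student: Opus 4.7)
The plan is to follow the proof of Theorem~\ref{estimate_of_m} closely, noting one key difference: the linear dependence now has coefficients in $F$ rather than $\F_q$, so the step $m>1$ no longer comes for free from the linear independence of $\alpha$ and $\beta$ over $\F_q$. Establishing $m\ge 2$ is therefore the principal new work. Let $m$ be the smallest positive integer for which a relation
\[
\lambda_0\alpha^m+\lambda_1\alpha^{m-1}\beta+\cdots+\lambda_m\beta^m=0
\]
exists with $\lambda_0,\ldots,\lambda_m\in F$ not all zero; it suffices to prove $m\ge q^k+1$, since any $d$ with the linear dependence property satisfies $d\ge m$. Exactly as in Theorem~\ref{estimate_of_m}, $\lambda_0\neq 0$ and $\lambda_m\neq 0$, and dividing through by $\beta^m$ shows that $\gamma:=\alpha/\beta$ is a root of a nonzero polynomial of degree $m$ in $F[x]$ which, by the minimality of $m$, is irreducible over $F$. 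Hence $[F(\gamma):F]=m$.

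To eliminate the case $m=1$, suppose $\gamma\in F$. Substituting $\alpha=\gamma\beta$ into $L(\alpha)=0$ and subtracting $\gamma^{q^n}L(\beta)=0$ produces a polynomial $Q(x)\in F[x]$ of $q$-degree at most $n-k$ satisfied by $\beta$. Because $L(x)/x$ is irreducible of degree $q^n-1$ over $F$ while $q^{n-k}<q^n-1$, the polynomial $Q$ must vanish identically, which is equivalent to $\gamma^{q^{n-e}}=\gamma$ for every exponent $e$ with $a_e\neq 0$ in $L$. Writing $S$ for this set of exponents (so $0,n\in S$), a short calculation using $0\in S$ gives the identity
\[
\gcd\{\,n-e:e\in S,\ e\ne n\,\}=\gcd S,
\]
and $\gcd S=1$ since $L$ is not a $q^s$-polynomial for any $s>1$. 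Hence $\gamma$ lies in $\bigcap_{e\in S\setminus\{n\}}\F_{q^{n-e}}=\F_q$, contradicting the linear independence of $\alpha$ and $\beta$ over $\F_q$.

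With $m\geq 2$ secured, the same subtraction carried out over $F(\gamma)$ shows that $\beta$ is a root of a $q$-polynomial of $q$-degree at most $n-k$ with coefficients in $F(\gamma)$. If this polynomial were zero, the same $\gcd$ calculation would again force $\gamma\in\F_q\subseteq F$ and so $m=1$, so the polynomial is nonzero; factoring out $\beta$ gives $[F(\gamma,\beta):F(\gamma)]\leq q^{n-k}-1$. Combining the towers $F\subseteq F(\beta)\subseteq F(\gamma,\beta)$ and $F\subseteq F(\gamma)\subseteq F(\gamma,\beta)$ yields
\[
q^n-1=[F(\beta):F]\leq[F(\gamma,\beta):F]=m\,[F(\gamma,\beta):F(\gamma)]\leq m(q^{n-k}-1),
\]
so $m\geq (q^n-1)/(q^{n-k}-1)>q^k$, and integrality forces $m\geq q^k+1$. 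The principal obstacle throughout is the elimination of $m=1$: this calls on all three hypotheses on $L$ together with the $\gcd$-of-exponents identity to pin $\gamma$ down into $\F_q$. Once that is done, the degree-counting conclusion matches the proof of Theorem~\ref{estimate_of_m}.
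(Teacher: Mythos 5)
Your proof is correct and takes essentially the same route as the paper's (which itself only sketches the changes from Theorem~\ref{estimate_of_m}): minimal $m$, $\gamma=\alpha/\beta$ of degree $m$ over $F$, the subtraction producing a $q$-polynomial of $q$-degree at most $n-k$ over $F(\gamma)$ annihilating $\beta$, the non-$q^s$-polynomial hypothesis ruling out the case that this polynomial vanishes (your gcd-of-exponents identity is just an explicit rewording of the paper's ``$m$ divides $n$ and every $n-i$'' step), and the degree count $q^n-1\le m(q^{n-k}-1)$. The only cosmetic difference is your separate elimination of $m=1$, which the paper absorbs into the general argument: the final inequality already forces $m>q^k\ge q$, and in the zero-polynomial branch the relevant ``degree greater than $1$'' fact concerns the degree of $\gamma$ over $\F_q$, which follows from the $\F_q$-independence of $\alpha$ and $\beta$ whether or not $m=1$.
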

 
 \begin{proof}
 The proof is very similar to that of Theorem \ref{estimate_of_m}, so we sketch the details.
 
 Let $m$ be the smallest  $d$ such that
 $\alpha^d$, $\alpha^{d-1}\beta$, \dots, $\beta^d$ are linearly dependent over $F$.
 We set $\gamma=\alpha/\beta$ and show that $\gamma$ is a root of an irreducible polynomial of degree $m$ over $F$, as in the proof of Theorem \ref{estimate_of_m}.
 Then we obtain that 
 \[
 a_{n-k}(\gamma^{(q^{n-k}-q^n)}-1)\beta^{q^{n-k}}+\cdots +a_0(\gamma^{1-q^n}-1)\beta=0.
 \]
 This shows that $\beta$ is a root of a $q$-polynomial of $q$-degree at most $n-k$ over the field $F(\gamma)$. 
 
 We consider the case that this polynomial is zero. It follows that 
 \[
 a_{n-i}(\gamma^{(q^{n-i}-q^n)}-1)=0
 \]
 for $k\leq i\leq n$. It is certainly true that $a_0\neq 0$ under our hypothesis that $L(x)/x$ is irreducible. We must then
 have $\gamma^{q^n}=\gamma$. This implies that $\gamma\in \mathbb{F}_{q^n}$.  Suppose that
 $\gamma\in\mathbb{F}_{q^m}$ and no smaller field $\mathbb{F}_{q^t}$, where $t<m$.  Then $m$ 
 divides $n$. Note that $m>1$, since $\alpha$ and $\beta$ are linearly independent over $\F_q$
  
  Consider now an equation
   \[
 a_{n-i}(\gamma^{(q^{n-i}-q^n)}-1)=0
 \]
 and suppose that $a_{n-i}\neq 0$. Then we have
 \[
 \gamma^{q^n}=\gamma^{q^{n-i}}.
 \]
 It must then be the case that $m$ divides $n-i$. This implies that $L$ is a $q^m$-polynomial and hence $m=1$, which
 we know not to be the case. It follows that $\beta$ is indeed a root of a nonzero $q$-polynomial of $q$-degree at most $n-k$ over $F(\gamma)$.
 
 We obtain the inequality
 \[
 [F(\gamma,\beta):F(\gamma)]\leq q^{n-k}-1.
 \]
 We also have 
 \[
 [F(\gamma):F]=m,
 \]
 since the minimal polynomial of $\gamma$ over $F$ has degree $m$.
 The rest of the proof follows as before.
 \end{proof}

 \subsection{$q$-Degree 2}\label{qdeg2}

 As we shall show, the  theorems in the previous section
 may be applied effectively to investigate the space of roots 
 of a $q$-polynomial of $q$-degree 2 but
 we need to
 make a hypothesis concerning the field $F$, 
 for without some assumption, the results we have in mind are not necessarily true.
 
 We are always assuming that $F$ contains $\F_q$.
 Our new hypothesis is that the largest finite subfield of $F$  is $\F_q$ itself. 
 This means that the only elements
 of $F$ that are algebraic over $\F_q$ are the elements of $\F_q$, or in other words, 
 $F$ is a regular extension of $\F_q$. 
A standard example of such a regular field $F$ is the function field $\F_q(t)$ in a single variable $t$ over $\F_q$.

 In the regular extension case, the following
 result is true. 
 
 \begin{lemma} \label{irreducible_in_larger_field}
 Suppose that the field is a regular extension of $\F_q$. Let $f$ be an irreducible polynomial in $\F_q[x]$. Then
 considered as a polynomial in $F[x]$, $f$ remains irreducible.
 \end{lemma}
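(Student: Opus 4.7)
The plan is to argue by contradiction using that the roots of $f$ are already algebraic over $\F_q$, so any factorization over $F$ must actually live inside $\F_q[x]$.

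First I would fix an algebraic closure $\overline{F}$ of $F$ and, inside it, identify the algebraic closure $\overline{\F_q}$ of $\F_q$ (which makes sense because $\F_q \subseteq F$). Since $f \in \F_q[x]$, every root of $f$ in $\overline{F}$ lies in $\overline{\F_q}$; in fact in the finite subfield $\F_{q^{\deg f}}$. Next I would translate the hypothesis on $F$: any element of $F$ that is algebraic over $\F_q$ lies in some finite subfield of $F$ containing $\F_q$, and by hypothesis the only such finite subfield is $\F_q$ itself, so $F \cap \overline{\F_q} = \F_q$.

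Now suppose, for contradiction, that $f$ factors nontrivially as $f = gh$ with $g,h \in F[x]$, each of positive degree strictly less than $\deg f$. The roots of $g$ in $\overline{F}$ form a subset of the roots of $f$, so they all lie in $\overline{\F_q}$. The coefficients of $g$ are (up to sign) elementary symmetric functions of its roots, hence lie in $\overline{\F_q}$. On the other hand, they lie in $F$ by assumption. So the coefficients of $g$ belong to $F \cap \overline{\F_q} = \F_q$, i.e.\ $g \in \F_q[x]$. This contradicts the irreducibility of $f$ in $\F_q[x]$, and the lemma follows.

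The main obstacle, such as it is, is simply unpacking the regularity hypothesis correctly: one has to notice that ``largest finite subfield equals $\F_q$'' is exactly the statement that $F \cap \overline{\F_q} = \F_q$, because every element algebraic over $\F_q$ sits in some $\F_{q^n} \subseteq F$. Once that identification is made, the proof is a short symmetric-function argument inside a fixed algebraic closure.
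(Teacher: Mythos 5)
Your argument is correct, but note that the paper does not actually prove this lemma: it simply cites Lang's \emph{Algebra} (Lemma 4.10, p.~366), where the statement is derived from the general theory of regular extensions and linear disjointness from the algebraic closure. Your proof is a self-contained elementary substitute, and it is essentially the standard argument underlying that machinery in the case of a perfect base field: fix $\overline{F}\supseteq\overline{\F_q}$, observe that regularity means exactly $F\cap\overline{\F_q}=\F_q$ (since any $a\in F$ algebraic over $\F_q$ generates a finite subfield $\F_q(a)$ of $F$, which must equal $\F_q$), and then note that any monic factor $g$ of $f$ over $F$ has all its roots among the roots of $f$, hence in $\overline{\F_q}$, so its coefficients lie in $F\cap\overline{\F_q}=\F_q$, forcing $g\in\F_q[x]$ and contradicting irreducibility. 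The only cosmetic gap is that you should normalize $g$ to be monic before invoking the elementary-symmetric-function description of its coefficients (absorb the leading coefficient into the cofactor $h$); and, to close the contradiction cleanly, one uses that divisibility of $f$ by $g$ in $F[x]$ with both polynomials in $\F_q[x]$ implies divisibility in $\F_q[x]$, by uniqueness of polynomial division. With those two trivial remarks added, your proof is complete and arguably preferable to the bare citation, since it uses nothing beyond the separability of irreducible polynomials over finite fields.
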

 
 A proof may be found in  \cite{L}, Lemma 4.10, p.366.

 We recall here that
  if $L$ is a $q$-polynomial of $q$-degree 2 with no repeated roots, its Galois group is a subgroup of $GL(2,q)$, by Lemma \ref{vector_space}.
 The group $GL(2,q)$ contains a normal subgroup $SL(2,q)$, the special linear group, consisting of the elements of determinant 1.
 It is well known that if $q$ is a power of 2 greater than 2, then $SL(2,q)$ is a simple group. However, $SL(2,2)$ is isomorphic to the symmetric
 group $S_3$ and is anomalous as it has a normal subgroup of index 2.
 
 If $q$ is odd and greater than 3, $SL(2,q)$ contains no proper normal subgroups with abelian quotient (in other words, the group is perfect).
 $SL(2,3)$ contains a normal subgroup of order 8 with cyclic quotient of order 3.
 
 We may now proceed to our main theorem relating to $q$-polynomials of $q$-degree 2.
 
 \begin{thm} \label{q-degree_2}
 Let $L$ be a $q$-polynomial of $q$-degree $2$ in $F[x]$. Suppose that the following hold.
 \begin{enumerate}
 \item $L(x)/x$ is irreducible over $F$.
 \item $q>2$.
 \item $F$ is a regular extension of $\F_q$.
 \item $E$ is the splitting field for $L$ over $F$.
 \item The Galois group $G$ of $E$ over $F$ contains $SL(2,q)$.
 \end{enumerate} 
 
 Then the evaluation mapping $\epsilon_r$ from the space $H_{2,r}(\F_q)$ of homogeneous polynomials of degree $r$ in two variables into $E$ is injective
 for all $r\geq 1$.
\end{thm}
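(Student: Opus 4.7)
The plan is to argue by contradiction, combining Theorem \ref{estimate_of_m} with an analysis of the cyclic quotients of $G$. Fix an $\F_q$-basis $\alpha,\beta$ of $V$. If $\epsilon_r$ fails to be injective for some $r \geq 1$, then $\{\alpha^{r-i}\beta^i : 0 \leq i \leq r\}$ is $\F_q$-linearly dependent, so there is a smallest integer $m$ for which $\alpha^m, \alpha^{m-1}\beta, \ldots, \beta^m$ are linearly dependent over $\F_q$, and $m \leq r$. I first verify that Theorem \ref{estimate_of_m} applies. Hypothesis 1 is given. For hypothesis 2, if $L$ were a $q^2$-polynomial then Lemma \ref{semi_linear_automorphism} would embed $G$ in $\Gamma L(1,q^2)$, which has order $2(q^2-1) < q(q^2-1) = |SL(2,q)|$ for $q > 2$, contradicting hypothesis 5. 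Hence $L$ has the form $x^{q^2} + a_1 x^q + a_0 x$ with $a_1 \neq 0$, placing us in the case $n = 2$, $k = 1$ of Theorem \ref{estimate_of_m}, which then yields $m \geq q + 1$.

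Next I extract from the proof of Theorem \ref{estimate_of_m} the additional arithmetic fact that $\gamma := \alpha/\beta$ is a root of an irreducible polynomial of degree exactly $m$ over $\F_q$, so $\F_q(\gamma) = \F_{q^m} \subseteq E$. Because $F$ is a regular extension of $\F_q$ we have $F \cap \overline{\F_q} = \F_q$, so $F$ and $\F_{q^m}$ are linearly disjoint over $\F_q$. Therefore $F \cdot \F_{q^m}$ is a Galois subextension of $E/F$ of degree $m$ with cyclic Galois group $\cong \Z/m\Z$, and the restriction map produces a surjection $G \twoheadrightarrow \Z/m\Z$. In particular, $m$ divides the order of the abelianization $G^{\mathrm{ab}}$.

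The main obstacle is then to bound $|G^{\mathrm{ab}}|$ sharply enough to contradict $m \geq q + 1$. For $q > 3$ the group $SL(2,q)$ is perfect, so $SL(2,q) \subseteq [G,G]$ and $G^{\mathrm{ab}}$ is a quotient of $G/SL(2,q)$, which is itself a subgroup of $GL(2,q)/SL(2,q) \cong \F_q^*$; this forces $m \leq q - 1 < q + 1$. The exceptional case $q = 3$ must be handled separately: since $|GL(2,3)/SL(2,3)| = 2$, the only candidates are $G = SL(2,3)$ and $G = GL(2,3)$, and using $[GL(2,3), GL(2,3)] = SL(2,3)$ together with $SL(2,3)^{\mathrm{ab}} \cong \Z/3$ one obtains $|G^{\mathrm{ab}}| \in \{2, 3\}$, again giving $m \leq 3 < 4 = q + 1$. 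In every case the bound $m \geq q + 1$ is violated, so no such dependence can exist and $\epsilon_r$ is injective for all $r \geq 1$.
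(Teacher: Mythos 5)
Your proof is correct and follows essentially the same route as the paper: rule out the $q^2$-polynomial case by an order count against $|SL(2,q)|$, invoke Theorem \ref{estimate_of_m} to get $m\geq q+1$, use regularity of $F$ over $\F_q$ to realize $F(\gamma)$ as a cyclic normal subextension of degree $m$, and derive a contradiction from the perfectness of $SL(2,q)$, with the same separate treatment of $q=3$. Your packaging via the abelianization $G^{\mathrm{ab}}$ and linear disjointness is just a rephrasing of the paper's argument with the normal subgroup $H=\mathrm{Gal}(E/F(\gamma))$ and Lemma \ref{irreducible_in_larger_field}.
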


\begin{proof}
We first show that $L$ is not a $q^2$-polynomial. For if $L$ is such a polynomial, the space $V$ of roots is a one-dimensional
vector space over $\mathbb{F}_{q^2}$ and the Galois group acts semilinearly on the vector space 
(see Lemma \ref{semi_linear_automorphism}).
Since the Galois group of $\mathbb{F}_{q^2}$ over $\F_q$ has order 2, $G$ has order dividing $2(q^2-1)$. Since we are assuming that
$G$ contains $SL(2,q)$ whose order is $q(q^2-1)$, we must have the inequality $q(q^2-1)\leq 2(q^2-1)$ and hence $q=2$, a possibility
we have excluded. It follows that $L$ is not a $q^2$-polynomial under our hypothesis.

Let $\alpha$ and $\beta$ be elements of the space of roots of $L$ that are linearly independent over $\F_q$. Let
$\epsilon_m$ be the evaluation mapping from $H_{2,m}(\F_q)$.
% defined by
%\[
%\epsilon_mP=P(\alpha,\beta)\in E.
%\]
Suppose that $\epsilon_m
$ is not injective, and $m$ is chosen minimal with this property. Then $\alpha^m$, $\alpha^{m-1}\beta$, \dots, $\beta^m$ are linearly dependent over $\F_q$ and it follows from Theorem \ref{estimate_of_m} that $m\geq q+1$. 

We have proved in Theorem \ref{estimate_of_m} that $\gamma=\alpha/\beta$ is a root of an irreducible polynomial $f$ of degree $m$ in
$\F_q[x]$. Since $F$ is regular over $\F_q$, Lemma \ref{irreducible_in_larger_field} implies that $f$
is also irreducible over $F$ and hence $[F(\gamma):F]=m$. Now $F(\gamma)$ is a normal subfield of $E$, since it is the splitting field
for $f$ over $F$. The Galois group $G$ thus maps $F(\gamma)$ into itself and induces the Galois group of $F(\gamma)$ over $F$ by its action.
The Galois group is cyclic of order $m$, since it is isomorphic to the Galois group of $\F_q(\gamma)$ over $\F_q$. Let $H$ be the subgroup
of $G$ that acts trivially on $F(\gamma)$. Elementary Galois theory tells us that $H$ is normal in $G$ and $G/H$ is cyclic of order $m$.

Let $S$ be a subgroup of $G$ isomorphic to $SL(2,q)$. Then $SH$ is a subgroup of $G$ and the quotient
\[
SH/H\cong S/S\cap H
\]
is a subgroup of $G/H$ and is thus cyclic. Since we remarked before the proof that $SL(2,q)$ is perfect for $q>3$, we deduce that
$S$ is contained in $H$ when $q>3$.

To finish the proof, assume $q>3$ and consider $G$ as a subgroup of $GL(2,q)$. 
Then $|G:H|=m$ is a divisor of $|GL(2,q):H|$, and since $H$ contains
$SL(2,q)$, $m$ is then a divisor of $|GL(2,q):SL(2,q)|=q-1$. This is impossible, as we already know that
$m\geq q+1$. In the case that $q=3$, our assumption is that $G$ is either $SL(2,3)$ or $GL(2,3)$. Since neither of these groups 
has an abelian
quotient of order greater than 3, the theorem holds in this case also.
\end{proof}

We remark that the theorem also holds in the excluded case $q=2$ provided that $L$ is not a 4-polynomial.

 The next theorem is similar to Theorem  \ref{q-degree_2}
 however it does not assume that $F$ is a regular extension, and 
 it does not assume that the Galois group contains $SL(2,q)$.
 The conclusion this time is about injectivity of evaluation maps $\eta_r$ over $F$, 
 which we know will fail when $r$ is sufficiently large,
 whereas Theorem  \ref{q-degree_2} is  about the evaluation maps $\epsilon_r$ over $\F_q$,
 which can be injective for all $r$.
 
 \begin{thm} \label{q-degree_2_F_version}
 Let $L$ be a $q$-polynomial of $q$-degree $2$ in $F[x]$. Suppose that the following hold.
 \begin{enumerate}
 \item $L(x)/x$ is irreducible over $F$.
 \item $L$ is not of the form $x^{q^2}+bx$ for some nonzero $b\in F$.
 \item $E$ is the splitting field for $L$ over $F$.
 \end{enumerate} 
 
 Then the evaluation mapping $\eta_r$ from the space $H_{2,r}(F)$ into $E$ is injective if $r<[E:F]/(q-1)$. 
\end{thm}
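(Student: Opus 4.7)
The plan is to follow the argument of Theorem~\ref{estimate_of_m_when_using_F} but to extract the sharper inequality $[E:F]\leq (q-1)m$, which upgrades the conclusion from $m\geq q+1$ to $m\geq [E:F]/(q-1)$. I will then read this contrapositively.

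First I would fix an $\F_q$-basis $\alpha,\beta$ of the space of roots of $L$. If $\eta_r$ fails to be injective, then there is a nonzero $P\in H_{2,r}(F)$ with $P(\alpha,\beta)=0$, which is exactly a nontrivial $F$-linear relation among $\alpha^r,\alpha^{r-1}\beta,\ldots,\beta^r$. Let $m$ be the smallest positive integer for which $\alpha^m,\alpha^{m-1}\beta,\ldots,\beta^m$ are $F$-linearly dependent; clearly $m\leq r$. I would then check that the three hypotheses of Theorem~\ref{estimate_of_m_when_using_F} are satisfied with $n=2$ and $k=1$: hypothesis~(1) is the same; hypothesis~(2) of the present theorem, which forbids $L=x^{q^2}+bx$, is equivalent to the $x^q$-coefficient being nonzero, and this simultaneously forces $k=1$ and rules out $L$ being a $q^2$-polynomial (any $q^2$-polynomial of $q^2$-degree $1$ lacks an $x^q$ term).

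Now I would run through the proof of Theorem~\ref{estimate_of_m_when_using_F} and retain the field-degree data that its statement discards. Setting $\gamma=\alpha/\beta$, the argument shows $[F(\gamma):F]=m$ and that $\beta$ satisfies a nonzero $q$-polynomial over $F(\gamma)$ of $q$-degree at most $n-k=1$. Such a polynomial has the form $c\beta^q+d\beta=0$ with $c\in F(\gamma)^\times$, so $\beta^{q-1}\in F(\gamma)$ and hence $[F(\gamma,\beta):F(\gamma)]\leq q-1$. Since $V_L=\F_q\alpha+\F_q\beta$ generates $E$ over $F$, we have $E=F(\alpha,\beta)=F(\gamma,\beta)$. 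Combining the two degree estimates,
\[
[E:F]=[F(\gamma,\beta):F(\gamma)]\cdot[F(\gamma):F]\leq (q-1)m,
\]
so $m\geq[E:F]/(q-1)$, and consequently $r\geq m\geq[E:F]/(q-1)$. The contrapositive is the statement to be proved.

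The main obstacle, already embedded in Theorem~\ref{estimate_of_m_when_using_F}, is verifying that the auxiliary $q$-polynomial satisfied by $\beta$ over $F(\gamma)$ does not vanish identically; if it did, the bound $[F(\gamma,\beta):F(\gamma)]\leq q-1$ would collapse. This is exactly the step where hypothesis~(2) enters, since a vanishing auxiliary polynomial would force $L$ to be a $q^m$-polynomial with $m>1$. Everything else is standard Galois-theoretic bookkeeping.
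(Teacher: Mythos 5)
Your proposal is correct and follows essentially the same route as the paper: both reduce to the minimal $m$ with an $F$-linear dependence among $\alpha^m,\alpha^{m-1}\beta,\ldots,\beta^m$, invoke the proof of Theorem~\ref{estimate_of_m_when_using_F} to get $[F(\gamma):F]=m$ and $[F(\gamma,\beta):F(\gamma)]\leq q-1$, and multiply degrees to obtain $[E:F]\leq m(q-1)$. Your explicit check that hypothesis~(2) forces $k=1$ and rules out the $q^2$-polynomial case is a useful elaboration of a step the paper leaves implicit, but it is not a different argument.
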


\begin{proof}
Let $\alpha$ and $\beta$ be elements of the space of roots of $L$ that are linearly independent over $\F_q$. We claim that
$E=F(\alpha,\beta)$. This follows from the fact that $E$ is generated over $F$ by the roots of $L$ and these roots are $\F_q$-combinations
of $\alpha$ and $\beta$. Likewise, if we set $\gamma=\alpha/\beta$, we have $E=F(\alpha, \gamma)$.

Let $m$ be the smallest positive integer such that 
the evaluation mapping $\eta_m$  
is not injective. Then, as we showed in the proof of Theorem \ref{estimate_of_m_when_using_F}, $[F(\gamma):F]=m$.
Furthermore, the same proof shows that $[F(\alpha,\gamma):F(\gamma)]\leq q-1$.

We now assemble the parts to show that
\[
[E:F]=[F(\alpha,\gamma):F]=[F(\alpha,\gamma):F(\gamma)][F(\gamma):F]\leq m(q-1)
\]
and this yields the desired inequality for $m$.
\end{proof}

Based on the results so far, we make the following conjecture.

\begin{conj}\label{c1}
Let $L=L(x)$ be a $q$-polynomial in $F[x]$ of $q$-degree $n$, 
where $F$ is a field of characteristic $p$ that contains $\F_q$.
Assume that $L$ is not a $q^s$-polynomial for $s>1$.
Assume that $L(x)/x$ is irreducible over $F$.\\
%Let $E$ be a splitting field for $L$. Let $V\subseteq E$ be the $\F_q$-span of the roots of $L$.
If $1\le r \le q-1$ then $\epsilon_r$ is injective.
\end{conj}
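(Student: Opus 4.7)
The $n=2$ case of Conjecture \ref{c1} is already implicit in Theorem \ref{estimate_of_m}: any nontrivial $\F_q$-linear dependence among $\alpha^d, \alpha^{d-1}\beta, \ldots, \beta^d$ with $\alpha, \beta \in V$ linearly independent over $\F_q$ forces $d \geq q^k + 1 \geq q + 1$ under the hypotheses, ruling out $r \leq q - 1$. The natural plan for general $n$ is to extend the Galois-theoretic substitution trick of Theorem \ref{estimate_of_m} to $n$ variables. Suppose for contradiction that $\epsilon_r$ fails to be injective for some $r$ with $1 \leq r \leq q - 1$, choose $r$ minimal with this property, and fix a basis $\alpha_1, \ldots, \alpha_n$ of $V$ together with a nonzero $P \in H_{n,r}(\F_q)$ satisfying $P(\alpha_1, \ldots, \alpha_n) = 0$. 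Writing $P = \sum_j P_j(x_2, \ldots, x_n)\, x_1^j$ with $P_j$ homogeneous of degree $r - j$, the minimality of $r$ forbids the possibility that every $P_j(\alpha_2, \ldots, \alpha_n)$ vanishes (otherwise some $P_j$ extends trivially in the variable $x_1$ to produce a nontrivial element of $\ker \epsilon_{r-j}$ with $r - j < r$). Hence $\alpha_1$ is algebraic of degree at most $r$ over $F(\alpha_2, \ldots, \alpha_n)$, and in particular
\[
[F(\alpha_1, \ldots, \alpha_n) : F(\alpha_2, \ldots, \alpha_n)] \leq r.
\]

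Setting $\gamma_i = \alpha_i / \alpha_1$ for $2 \leq i \leq n$ and substituting $\alpha_i = \gamma_i \alpha_1$ into $L(\alpha_i) = 0$, then subtracting a suitable multiple of $L(\alpha_1) = 0$, should produce (as in Theorem \ref{estimate_of_m}) auxiliary $q$-polynomials of reduced $q$-degree satisfied by $\alpha_1$ over $F(\gamma_2, \ldots, \gamma_n)$. The non-$q^s$-polynomial hypothesis on $L$ should ensure that at least one of these auxiliary polynomials is not identically zero, producing a workable upper bound on $[F(\alpha_1, \ldots, \alpha_n) : F(\gamma_2, \ldots, \gamma_n)]$. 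Combined with the lower bound $[F(\alpha_1) : F] = q^n - 1$ furnished by the irreducibility of $L(x)/x$, one would then hope to extract a numerical contradiction with $r \leq q - 1$.

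The main obstacle is precisely this final step. In the $n = 2$ case, minimality forces the single ratio $\gamma = \alpha/\beta$ to lie in $\overline{\F_q}$, giving a clean tower of field extensions to analyze. For $n > 2$, the ratios $\gamma_2, \ldots, \gamma_n$ only satisfy a joint multivariate algebraic relation over $\F_q$; they need not lie in $\overline{\F_q}$ individually, and $F(\gamma_2, \ldots, \gamma_n)$ may have substantial transcendence degree over $F$. Bounding $[F(\alpha_1, \ldots, \alpha_n) : F(\gamma_2, \ldots, \gamma_n)]$ tightly enough to contradict the lower bound coming from $L(x)/x$ seems to demand a genuinely new ingredient: either a sharper consequence of the non-$q^s$-polynomial hypothesis in higher dimensions, or a representation-theoretic attack identifying $\ker \epsilon_r$ as an $FG$-submodule of $H_{n,r}(\F_q)$ and ruling it out by an irreducibility theorem in the style of Doty that does not require $G \supseteq SL(n,q)$. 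Either route requires machinery beyond what the present paper develops, which is presumably why the statement is offered as a conjecture rather than a theorem.
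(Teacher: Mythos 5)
The statement you are addressing is offered in the paper as Conjecture \ref{c1}: the authors give no proof of it, so there is no argument of theirs to compare yours against, and your proposal --- as you yourself say in its final sentence --- is not a proof but a strategy sketch that stalls at the decisive step. That said, your partial observations are essentially sound. The reduction of the $n=2$ case to Theorem \ref{estimate_of_m} is correct: for two variables, injectivity of $\epsilon_r$ is exactly the $\F_q$-linear independence of $\alpha^r,\alpha^{r-1}\beta,\ldots,\beta^r$, and since $a_0\neq 0$ forces $k\geq 1$ (while the non-$q^s$-polynomial hypothesis rules out the degenerate shape $x^{q^2}+a_0x$), any dependence requires $d\geq q^k+1\geq q+1>q-1$. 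This is precisely the evidence the authors themselves exploit in Theorem \ref{q-degree_2}.

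For $n>2$ the gap is where you say it is, but note one additional flaw earlier in your outline: writing $P=\sum_j P_j(x_2,\ldots,x_n)x_1^j$, the minimality of $r$ only excludes the vanishing of $P_j(\alpha_2,\ldots,\alpha_n)$ for those $j\geq 1$ with $P_j\neq 0$; the term $P_0$ is itself homogeneous of degree $r$, so its vanishing at $(\alpha_2,\ldots,\alpha_n)$ yields an element of $\ker\epsilon_r$ rather than of some $\ker\epsilon_{r-j}$ with $r-j<r$, and minimality in $r$ alone does not dispose of it (you would need an auxiliary induction on the number of variables, or a different normalization of $P$). More importantly, the substitution $\gamma_i=\alpha_i/\alpha_1$ does not reproduce the mechanism of Theorem \ref{estimate_of_m}: there the single ratio $\gamma$ was shown to be algebraic of degree $m$ over $\F_q$, hence to lie in $\mathbb{F}_{q^m}$, and the contradiction came from the divisibility of $[F(\gamma,\beta):F]$ by $[F(\beta):F]=q^n-1$; with several ratios constrained only by one joint multivariate relation, neither the algebraicity over $\F_q$ nor any useful degree bound on $F(\gamma_2,\ldots,\gamma_n)$ survives, exactly as you observe. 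The paper's actual evidence for the conjecture beyond $n=2$ consists of Theorem \ref{monomorphism_if_less_than_q} (the case $F=\F_q$, proved by the entirely different device of the basis $\alpha,\alpha^q,\ldots,\alpha^{q^{n-1}}$ and uniqueness of $q$-adic digit expansions) together with Corollary \ref{space_roots_hom_irred} and Doty's theorem, which is essentially the representation-theoretic route you propose at the end --- but, as you note, that route needs $G$ to contain $SL(n,q)$, which the conjecture does not assume. So the statement remains open, and your proposal does not close it.
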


\end{document}